\documentclass[]{siamart171218}


\usepackage{lipsum}
\usepackage{amsfonts}
\usepackage{graphicx}
\usepackage{epstopdf}
\usepackage{algorithmic}
\usepackage{tikz}
\usepackage[caption=false]{subfig}
\ifpdf
  \DeclareGraphicsExtensions{.eps,.pdf,.png,.jpg}
\else
  \DeclareGraphicsExtensions{.eps}
\fi


\newsiamremark{remark}{Remark}
\newsiamremark{hypothesis}{Hypothesis}
\crefname{hypothesis}{Hypothesis}{Hypotheses}
\newsiamthm{claim}{Claim}

\headers{ Domain Decomposition Algorithm For Eikonal Equations}{Lindsay Martin and Richard Tsai}

\title{A Multiscale Domain Decomposition Algorithm For Boundary Value Problems For Eikonal Equations\thanks{Submitted: 05/11/2018.
\funding{This research is partially supported by National Science Foundation Grants DMS-1620396 and DMS-1720171.}}}

\author{Lindsay Martin\thanks{Department of Mathematics, The University of Texas at Austin, Austin, TX
  (\email{lmartin@math.utexas.edu}).}
\and Richard Tsai \thanks{Department of Mathematics and Institute for Computational Engineering and Sciences (ICES), The University of Texas at Austin, Austin, TX,
\mbox{KTH Royal Institute of Technology, Sweden}
  (\email{\mbox{ytsai@math.utexas.edu})}.}
  }

\usepackage{amsopn}

\ifpdf
\hypersetup{
  pdftitle={A Multiscale Domain Decomposition Algorithm For Boundary Value Problems For Eikonal Equations},
  pdfauthor={ Lindsay Martin and Richard Tsai}
}
\fi




\begin{document}

\maketitle

\begin{abstract}
   In this paper, we present a new multiscale domain decomposition algorithm for computing solutions of static Eikonal equations. The new method is an iterative two-scale method that uses a parareal-like update scheme in combination with standard Eikonal solvers. The purpose of the two scales is to accelerate convergence and maintain accuracy. We adapt a weighted version of the parareal method for stability, and the optimal weights are studied via a model problem. Numerical examples are given to demonstrate the method.
\end{abstract}



\section{Introduction}\label{sec:intro}
The Eikonal equation has many applications in optimal control, path planning, seismology, geometrical optics, etc. The equation is fully nonlinear and classified as a Hamilton-Jacobi equation. Usually,  classical solutions do not exist, and the unique viscosity solution is sought after.
Our goal is to numerically solve the following boundary value problem for the static Eikonal equation:

\begin{align}\label{eq:eikonal}|\nabla u(x)|&=r_{\epsilon}(x), \; x \in \Omega \subset \mathbb{R}^d \\ u(x)&=g(x),\; x\in \Gamma \subset \partial \Omega \end{align}

In particular, we are interested in the case where $$r_{\epsilon}(x)=r_0(x)+a_\epsilon(x),$$ where $r_0$ is smooth and $a_\epsilon$ describes multiscale features. 
Many serial algorithms exist for computing numerical solutions to Eikonal equations. However, these algorithms have limitations when applied to large scale discretized systems.  Since we are interested in Eikonal equations that have multiscale features, a very fine grid discretization is needed in order to accurately capture the fine scale features. This creates a large system of coupled nonlinear equations to solve. Therefore, the numerical solutions are expensive to compute and speed up is desired. The most popular serial algorithms are the Fast Sweeping Method (FSM) \cite{tsaicheng03, zhao05} and the Fast Marching Method (FMM) \cite{tsit195,seth96} which have complexity $\mathcal{O}(N)$ or $\mathcal{O}(N\log N)$, respectively. Here, $N$ is the total number of grid points. Hidden in the $\mathcal{O}(N)$ complexity of FSM is a constant that corresponds to the number of times a characteristic curve of \cref{eq:eikonal} ``turn around.''

There are several approaches to reducing the computational cost of numerically solving Eikonal equations. For certain periodic functions $r_\epsilon$, one approach is homogenization \cite{Luo11, Ober09}. The goal of homogenization is to derive an effective function, $\overline{r}$, that accurately describes the effective properties of $r_\epsilon$ in the solution. Once $\overline{r}$ is known, the homogenized equation can be solved on the coarse grid which is independent of the small parameter $\epsilon$.  For more general $r_\epsilon$, we consider domain decomposition methods. The development of domain decomposition algorithms for Eikonal equations is nontrival  because of the causal nature of the equations. Standard domain decomposition methods can be difficult to apply because information may not be known at the boundaries of subdomains a priori. Furthermore, the causal relations among the subdomains may change depending on the solutions. Our new algorithm combines features from parareal methods and standard Eikonal solvers in order achieve speed up and maintain accuracy. A set of coarse grids is used to set up boundary conditions for the subdomains. After each subdomain is processed in parallel, the method uses are parareal-like update in order to speed up the accuracy of the solution on the coarse grids. 

Next we give an overview of the discretization of \cref{eq:eikonal} and Fast Sweeping Methods, followed by a review of current parallel methods for Eikonal equations. The paper is organized as follows. In  \cref{sec:para}, we give an overview of parareal methods. Our new algorithm is presented in \cref{sec:main}. The stability analysis, complexity and speed up are given in \cref{sec:analysis}, experimental
results are in \cref{sec:results}, and the summary and conclusion follow in \cref{sec:conclusions}.

\subsection{Upwind discretization and FSM}\label{sec:FSM}

 The Eikonal equation \cref{eq:eikonal} can be derived from an optimal control problem. Suppose a particle travels at speed ${F:\Omega \to \mathbb{R}}$ and its direction of travel is the control of the system. Let $g:\partial \Omega \to \mathbb{R}$ be the exit-time penalty charged at the boundary. Then the value function $u(x)$ is defined to the minimum time it takes to travel from $x$ to $\partial \Omega$. In \cite{cranlions83}, it is shown the viscosity solution to \cref{eq:eikonal} coincides with the value function of the optimal control problem and the characteristics of the PDE coincide with the optimal paths for moving through $\Omega$. 
 
 In our case $F(x)=1/r_\epsilon(x)$. Thus, we refer to $r_\epsilon$ as the slowness function. For this paper, we choose the following first-order upwind discretization on a uniform Cartesian grid. Let $u_{i,j}$ denote the numerical solution at $\mathbf{x}_{i,j}$. For the sake of notation, we will omit the numerical solution's dependence on the grid size $h$. We use a Godonuv upwind scheme to discretize the Eikonal equation at points in the interior of the computational domain \cite{rouytour92}:  \begin{equation}\label{eq:godonuv}\sqrt{\max(a^+,b^-)^2+\max(c^+,d^-)^2}=r_{i,j}, \end{equation} where \begin{align*} a & = D_x^-u_{i,j} = \frac{u_{i,j}-u_{i-1,j}}{h}\\ b & = D_x^+u_{i,j} = \frac{u_{i+1,j}-u_{i,j}}{h} \\  c & = D_y^-u_{i,j} = \frac{u_{i,j}-u_{i,j-1}}{h}\\  d & = D_y^+u_{i,j} = \frac{u_{i,j+1}-u_{i,j}}{h}, \end{align*} for $i=1,\ldots I-1$ and $j=1,\ldots,J-1$. Here, we have  $x^+=\max(x,0)$ and ${x^-=\max(-x,0)}$. On the boundary nodes, we will use a one sided difference, i.e., in \cref{eq:godonuv} use $b^-$ in place of $ \max(a^+,b^-)$ if $i=0$, $a^+$ in place of  $\max(a^+,b^-)$ if $i=I$, $d^-$ in place of $\max(c^+,d^-)$ if $j=0$, and $c^+$ in place of $\max(c^+,d^-)$ if $j=J$. 
 
This discretization is consistent and monotone and converges to the viscosity solution as $h \to 0$ \cite{barsoug91}. The upwind scheme is also causal, i.e., $u_{i,j}$ depends only on the neighboring grid values that are smaller. After discretization, we have a system of $N=(I+1)(J+1)$ coupled nonlinear equations. A simple approach is to solve the system iteratively \cite{rouytour92}. However, it is important to take advantage of the causality of the solution. In the fast marching method (FMM) \cite{tsit195,seth96}, the solution is updated one grid node at a time and the ordering of grid nodes is given by whichever grid node has the smallest value at the time of updating. Because a heapsort algorithm is needed, the complexity is $\mathcal{O}(N \log N)$. Next we describe the fast sweeping method (FSM)  \cite{tsaicheng03, zhao05} which we have chosen to use in our method. FSM uses Gauss-Seidel updates following a predetermined set of grid node orderings. For simplicity, we will describe the algorithm in two dimensions. 
\subsection*{Initialization} Set $u_{i,j}=g_{i,j}$ for $x_{i,j}$ on or near the computational boundary. These values are fixed in later iterations. For the all other grid nodes, assign a large positive value.
\subsection*{Sweeping iterations} A compact way of writing the grid orderings in \\ C/C++ is: 

\texttt{for(s1=-1;s1<=1;s1+=2)}

\texttt{for(s2=-1;s2<=1;s2+=2)}

\texttt{for(i=(s1<0?I:0);(s1<0?i>=0:i<=I);i+=s1)}

\texttt{for(j=(s2<0?J:0);(s2<0?j>=0:j<=J);j+=s2)}

\subsection*{Update formula:}
For each grid node $\mathbf{x}_{i,j}$ whose value is not fixed during the initialization, compute the solution to \cref{eq:godonuv} using the current values at the neighboring grid nodes. Denote the solution by $\tilde u$, then the update formula is as follows: \begin{equation}\label{eq:FSMupdate} u_{i,j}^{new}=\min(u_{i,j}^{curr},\tilde u)\end{equation}

The alternating ordering of sweeping ensures that all the directions of characteristics are captured. In \cite{zhao05} it is shown that with the first order Godonuv upwind scheme, $2^d$ sweeps is sufficient to compute the numerical solution to first order in $h$. The exact number of sweeps needed is related to the number of times characteristics change directions. Thus in general the computational complexity of the fast sweeping method is $O(N)$ with the caveat that the constant in front of $N$ can be very large depending on the characteristics of the equation.  In \cref{sec:main}, we will describe how we use the fast sweeping method as the Eikonal equation solver in our method.

 \subsection{Review of current parallel methods} Here we give a brief overview of existing parallel approaches. In \cite{zhao07}, the author proposes two parallelizations of FSM. The first performs the $2^d$  sweeps of the domain on different processors and after each iteration information is shared by taking the minimum value at each grid node from each sweep.  The second is a domain decomposition method that performs FSM on each subdomain in parallel. The information is shared along mutual boundaries after each iteration. The drawbacks to this method are that subdomains have to wait to be updated until the information propagates to that part of the domain and the number of sweeping iterations may be more than the number needed in serial FSM.

In \cite{detrix13} the authors introduce a method that takes advantage of the following fact: for the upwind scheme \cref{eq:godonuv}, certain slices of the grid nodes do not directly depend on each other. The method uses FSM where the sweeping ordering is designed to allow these sets of grid nodes to be updated simultaneously. The advantage of this method is that the number of iterations needed in the parallel implementation is equal to the serial FSM.

Several algorithms have been developed to parallelize FMM. In \cite{Breuss11}, the authors propose a domain decomposition method for FMM. The main idea is to split the boundary among different processors which leads to an equation dependent method. In \cite{Yang17}, another domain decomposition algorithm is presented for FMM. In this method, the computational domain is split among different processors and a novel restarted narrow band approach which coordinates the communications among the boundaries of the domains is used. 

Domain decomposition methods that utilize two scales can be found in \cite{Cac12, chacvlad15}. In \cite{Cac12}, the authors take advantage of the optimal control formulation of Eikonal equations. First, the algorithm computes the solution of \cref{eq:eikonal} on a coarse grid. Next, the domain decomposition is determined by the feedback optimal control. Lastly, the solution of the equation is computed on a fine grid in each subdomain. However, the algorithm can lead to complex division of the domain. The method in \cite{chacvlad15} is a parallelization of the Heap Cell method (HCM) \cite{chacvlad12}.  HCM maintains a list of cells to be processed. The order of processing is determined by an assigned cell value that is given by an estimate of the likelihood that that cell influences other cells.  If it is determined that a cell highly influences other cells it should be processed first. The method mimics FMM on the coarse level, and FSM is used at the cell level. The parallelization of HCM divides the cells evenly among $p$ heaps and performs HCM among each individual heap. If a cell is tagged for reprocessing then it is added to the heap with the current lowest number of cells. This method was found to achieve the best speed up on problems where the amount of work per cell is high.

\section{Overview of parareal methods}
\label{sec:para}

Parareal methods \cite{Lions01,Bal02} were developed to parallelize numerical computations of the solutions to ODEs of the form \begin{equation}\label{eq:ODE} \frac{d}{dt}u=f(u), \; u(0)=u_0 \end{equation}  on bounded time interval $[0,T]$. Let $u_n^k$ be the computed solution at iteration $k$ at time $t_n=nH$. Let $C_H$ and $F_H$ be the numerical coarse and fine integrators, over time step $H$. The idea is that $C_H$ is less accurate and inexpensive to compute, and $F_H$ is very accurate and expensive to compute. The parareal update scheme is then defined as \begin{equation}\label{eq:para} u_{n+1}^{k+1}=C_H(u_n^{k+1})+F_H(u_n^k)-C_H(u_n^k), \; n,k=0,1,2,\ldots N\end{equation} with initial conditions \begin{equation}\label{eq:ICpara} u_0^k=u_0, \; k=0,1,2,\ldots N\end{equation} The zeroth iteration is given by \begin{equation}\label{eq:zeroiter} u_{n+1}^0=C_H(u_n^0), \; n=0,1,2,\ldots N \end{equation} 

The integrations $F_H(u_n^k)$ are independent for each $n$ and  can be computed in parallel. If $C_H$ is of order 1, then under certain assumptions, the error after $k$ iterations of the parareal scheme is of order $o(H^{k}+e^f)$ where $e^f$ is the global error from solving \cref{eq:ODE} with the fine integrator $F_H$ \cite{Maday10} . The method only provides speed up if $k$ is much smaller than $N$. 

The method is generally unstable for hyperbolic problems and problems with imaginary eigenvalues \cite{Staff05,Bal05}. Parareal methods for highly oscillatory ODEs can be found in \cite{Ariel16, Haut14}. In \cite{Gander14}, analysis of the parareal method on a class of ODEs originating in Hamiltonian dynamical systems is presented, and in \cite{ Legoll13} the parareal method is applied to stiff dissipative ODEs. Recently, a ``weighted" parareal scheme, called $\theta$-parareal, was proposed in \cite{Ariel17}. 
Following the scheme in \cite{Ariel17}, let \begin{equation}\label{eq:wtdpara} u_{n+1}^{k+1}=\theta C_H(u_n^{k+1})+ (1-\theta)C_H(u_n^k)+F_H(u_n^k)-C_H(u_n^k)\end{equation} which simplifies to \begin{equation}\label{eq:wtdparasimp} u_{n+1}^{k+1}=\theta C_H(u_n^{k+1}) +F_H(u_n^k)-\theta C_H(u_n^k).\end{equation} In \cite{Ariel17}, the ``weight" $\theta$ is generalized to an operator which maps $C_Hu$ to a small neighborhood of $F_Hu$. In this paper, we only let $\theta$ be a real number which may vary for each grid node, i.e., $\theta=\theta_n^k$.

Several properties of the parareal scheme are appealing when solving Eikonal equations. 
\begin{itemize}
\item Parareal methods use communications between the two-scales in order to propagate information quickly through time. Because the fine integrations can be computed in parallel, the method is able to deal with a large number of unknowns. 
\item The characteristics of Eikonal equations also have a ``time-like" structure which makes parareal methods attractive. 
\end{itemize}
The main challenge in applying the parareal scheme to Eikonal equations is that we are now dealing with an infinite number of characteristics simultaneously. We also must be able to handle the collision of characteristics which should be captured accurately in the numerical solution in order to compute the viscosity solution. We adapt the $\theta$-parareal scheme in order to stabilize the new method.

\section{The new method}\label{sec:main}

The method is a domain decomposition method that uses two scales to resolve the fine scale features in $r_{\epsilon}$ and propagate information through the computational domain. We use FSM as the Eikonal equation solver on the coarse and fine grid. An adapted version of the $\theta$-parareal method is used to propagate information along the characteristics efficiently where the weight $\theta$ stabilizes the method. The optimal choice of weights for stability is studied in  \cref{sec:analysis}. First, we will demonstrate the method on a one dimensional problem and then explain how to set up the method in two dimensions which can be generalized to higher dimensions.

\subsection{One dimensional example} Consider the following one dimensional \\ Eikonal equation: \begin{align}\label{eq:1d} |u_x| & =r(x), \; 0<x<1, \\ \label{eq:bc1d} u(0) & =u(1)=0, \end{align} where \begin{equation} r_\epsilon(x)= 1 + 10 e^{\frac{(x-.75)^2}{2(.01)^2}}.\end{equation} \cref{fig:1Dexamples} shows the plot of the slowness function $r_\epsilon$. Let the coarse grid be defined by $$\Omega^H:=\{jH: j = 0,1, \ldots,N\},$$ where $H=1/N$ and for $i=0,1,\ldots, N-1.$ Define the fine grids by $$\Omega^h_i := \{iH+mh: m=0,1,\ldots M\},$$ where $h=1/(MN).$ Define $\Omega^h:=\bigcup_{i=0}^{N-1}\Omega^h_i$. The solution to the upwind Godonuv scheme in one dimension is given by \begin{equation}\label{eq:1dgod} C_H(U_{i-1},U_{i+1}):=\min(U_{i-1},U_{i+1})+r(X_i)H.\end{equation}

 We see that if we only solve \cref{eq:1d} on the coarse grid, the bump in the slowness function is not seen and the solution is very inaccurate. There are also points in $\Omega^H$ where the flow of characteristics is incorrect. Therefore, we keep track of wind direction, i.e., which neighboring grid node gives the minimum in \cref{eq:1dgod} . Let $X_i=iH$. We denote the numerical solution at the $k$th iteration at the coarse grid  node $X_i$ by $U_i^k$. For grid nodes on the subintervals, $\Omega^h_i$, let $X_{i_m}=iH+mh$ and $u^k_{i_m}$ be the numerical solution at the $k$th iteration at the fine grid node $X_{i_m}$.  For each coarse grid node , $X_i,$  $i=1,\ldots, N-1$, we will get two values from the fine grid computations. One value is from $\Omega_{i-1}^h$ and another from $\Omega_{i}^h$. Let $u_i^k$ be the fine grid solution at the $k$th iteration at $X_i$ which we will define in step 3.
\begin{figure}[h]
\centering
\includegraphics{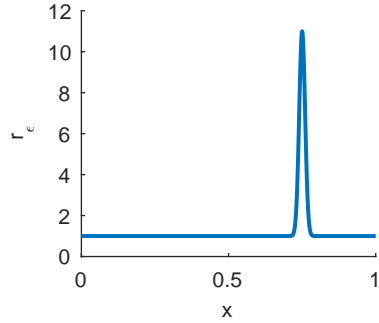}
\caption{$ r_\epsilon(x)= 1 + 10 e^{\frac{(x-.75)^2}{2(.01)^2}}$.  }
\label{fig:1Dexamples}
\end{figure}
The method is as follows:

\subsection*{Step 1: Initialization}  Solve  \cref{eq:1d} with boundary conditions \cref{eq:bc1d} via FSM on the coarse grid $\Omega^H,$ and denote the solution $U^0$. If the left hand neighboring grid node is used to compute $U_i^0$, denote the wind direction at $X_i$ by $W_i^0=1$. If the right hand neighboring grid node is used, define $W_i^0=-1$.  

\subsection*{Step 2: Update boundary conditions for the subintervals} Once the coarse grid has been initialized, we use the coarse grid values, $U^k$, as boundary values for $\Omega_i^h$. The characteristics may flow into or out of $\Omega_i^h$. Thus, when setting the boundary conditions, we check the wind direction to see if the coarse grid value should be used as a boundary value.  Intuitively, if a characteristic at a coarse grid node, $x_{i_0}$ or $x_{i_M}$, is arriving into the subinterval, then we set the boundary value to $U_i$ or $U_{i+1}$ at $x_{i_0}$ or $x_{i_M},$ respectively. Otherwise, we set the boundary value to be $\infty$.

\subsection*{Step 3: Solve for $u^k$ in parallel} In parallel for each $i=0,1,\ldots, N-1$, we solve via FSM on $\Omega_i^h$  \begin{equation}\label{eq:1dsubint} |u_x|  =r(x), \; x\in [iH,(i+1)H] \end{equation} with the boundary conditions described in step 2 . Denote the solutions after sweeping by $u_{i_m}^k$ for $m=0,\ldots,M$. We keep track of the fine wind directions, $w_{i_m}^k$, in the same manner as in step 1. For each coarse grid node , $X_i,$  $i=1,\ldots, N-1$, we will get two values from the fine grid computations. One value is from $\Omega_{i-1}^h$ and another from $\Omega_{i}^h$. Consider a coarse grid point, $X_i$:
\begin{itemize}
\item If $w_{{i-1}_M}^k=w_{i_0}^k=1$, then we choose $u_i^k$ to be $u_{{i-1}_M}^k$ since the wind is flowing from left to right.
\item If $w_{{i-1}_M}^k=w_{i_0}^k=-1$,  then we choose $u_i^k$ to be $u_{{i}_0}^k$ since the wind is flowing from right to left.
\item Otherwise, we take the minimum of $u_{{i-1}_M}^k$ and $u_{{i}_0}^k$. 
\item We set $w_i^k$ to be the wind value corresponding to the fine grid point used to define $u_i^k$. \end{itemize} For the given example, $U^0$ is plotted in \cref{fig:init1d} and $u^0$ is plotted in \cref{fig:fineinit1d} .

\begin{figure}[h]
\centering
\subfloat[]{\label{fig:init1d}\includegraphics{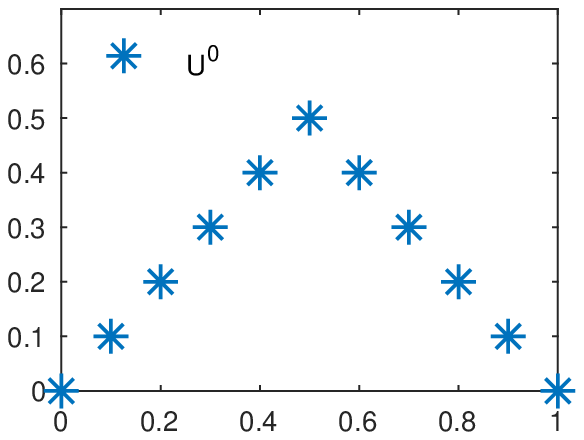}}
\subfloat[]{\label{fig:fineinit1d}\includegraphics{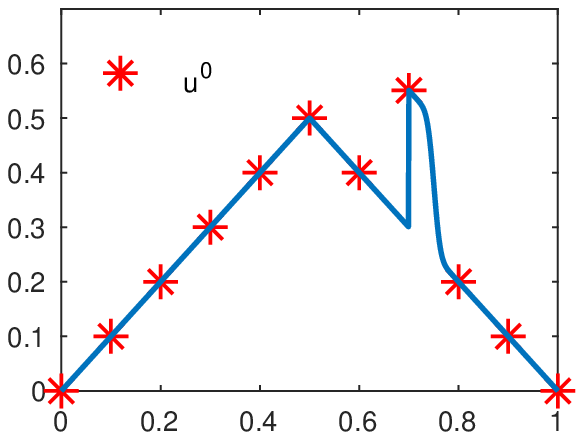}}
\caption{\protect\subref{fig:init1d} Plot of $U^0$. \protect\subref{fig:fineinit1d} Plot of $u^0$ using $U^0$ as boundary conditions as defined in step 2.}
\label{fig:iterations01d}
\end{figure}
\subsection*{Step 4: Coarse grid updates} Now we compute $U^{k+1}$. We use the previous coarse and fine wind directions to determine whether or not we will use a weighted correction. We sweep the grid as in FSM and the update formula is as follows:
\begin{itemize}
\item Let $\tilde{U}=C_H(U_{i-1}^{k+1},U_{i+1}^{k+1})$. If the left hand neighboring grid node was used to compute $\tilde{U}$ then denote the current wind direction  $\tilde W=-1$. If the right hand neighboring grid node was used, define $\tilde W=1$.
\item If $W_i^k=w_i^k=\tilde W$ then we use a weighted correction update,i.e., \begin{equation} U_i=\theta \tilde U+u_i^k-\theta C_H(U_{i-1}^k,U_{i+1}^k)\end{equation} and $W_i=w_i^k$.
\item Otherwise we set $U_i=u_i^k$ and $W_i=w_i^k$.
\item After the weighted corrections, the solutions may have the wrong causality because information on the fine grid that was not seen previously has now been propagated to the coarse level. To correct this, we implement a causal sweep after each coarse grid update. Sweeping the coarse grid in both directions, the causal update is as follows:
\begin{itemize}
\item If $W_i=1$ and $U_i< U_{i-1}$, then $U_i=U_{i-1}$.
\item If $W_i=-1$ and $U_i< U_{i+1}$, then $U_i=U_{i+1}$.
\end{itemize}
\end{itemize}
After the causal sweep on the coarse grid, denote the solution by $U^{k+1}$ and the wind directions by $W^{k+1}$. Repeat steps 2-4 until convergence.

In \cref{fig:iter11d}, we see that at $X_7=0.7$ the effect of the Gaussian bump in $r_\epsilon$ has been propagated to the coarse  level. Before the causal sweep, $U_6<U_7$, but $W_6=-1$. Therefore, after the casual sweep, $U^1_6=U^1_7$.  \cref{fig:iterations21d,fig:iterations31d} show the next two iterations of the method which converges at $k=3$.
Next, we introduce the method in two dimensions in more detail.
\begin{figure}[h]
\centering
\subfloat[]{\label{fig:iter11d}\includegraphics{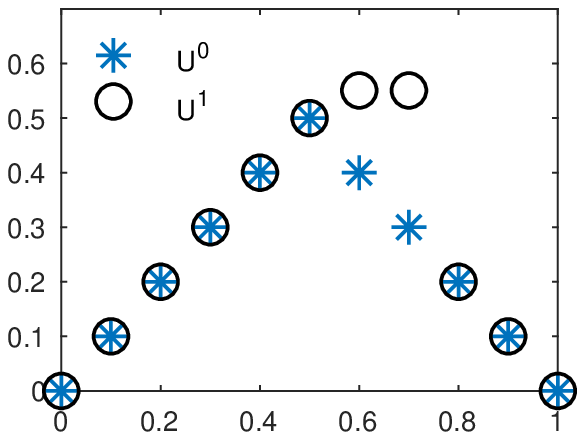}}
\subfloat[]{\label{fig:iterfine11d}\includegraphics{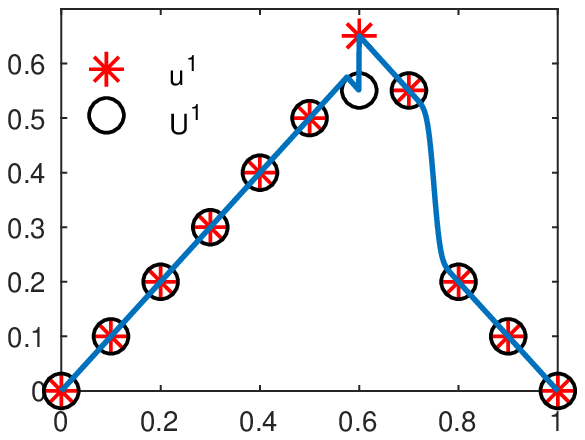}}
\caption{\protect\subref{fig:iter11d} Plot of  $U^0$ and $U^1$. \protect\subref{fig:iterfine11d} Plot of $u^1$ using $U^1$ as boundary conditions as defined in step 2.}
\label{fig:iterations11d}
\end{figure}
\begin{figure}[h]
\centering
\subfloat[]{\label{fig:iter21d}\includegraphics{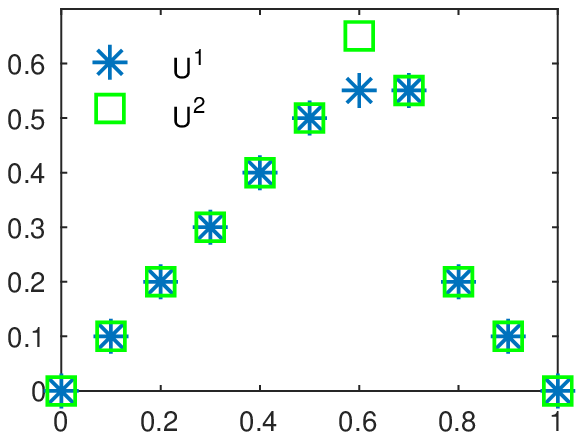}}
\subfloat[]{\label{fig:iterfine21d}\includegraphics{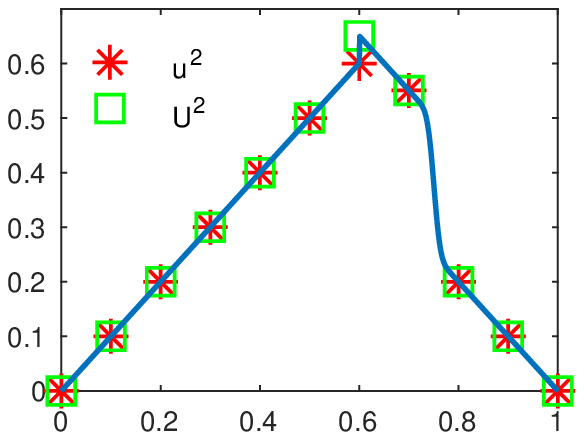}}
\caption{\protect\subref{fig:iter21d} Plot of  $U^1$ and $U^2$. \protect\subref{fig:iterfine21d} Plot of $u^2$ using $U^2$ as boundary conditions as defined in step 2.}
\label{fig:iterations21d}
\end{figure}
\begin{figure}[h]
\centering
\subfloat[]{\label{fig:iter31d}\includegraphics{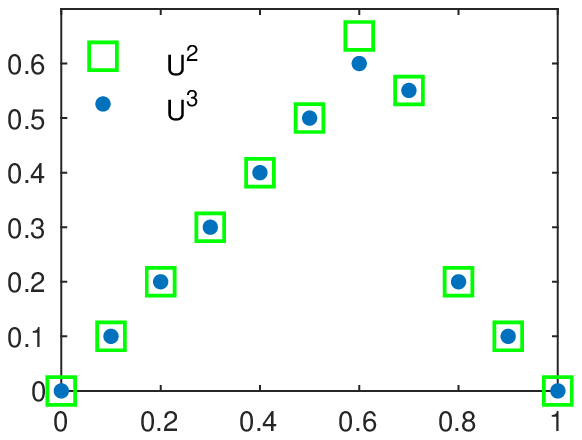}}
\subfloat[]{\label{fig:iterfine31d}\includegraphics{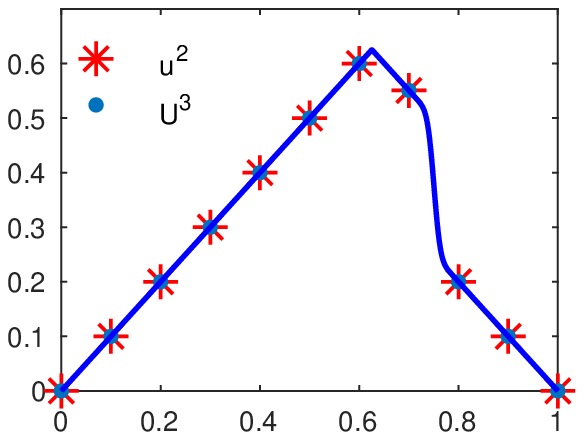}}
\caption{\protect\subref{fig:iter31d} Plot of  $U^2$ and $U^3$. \protect\subref{fig:iterfine31d} Plot of $u^3$ using $U^3$ as boundary conditions as defined in step 2. We see that the method has converged and $U^3=u^3$.}
\label{fig:iterations31d}
\end{figure}

\subsection{New method in two dimensions} In two dimensions we solve: \begin{align}\label{eq:2Deq} |\nabla u(x) | & = r(x), \; \;  x \in \Omega=[0,1]^2 \\ \label{eq:2Dbc} u(x) & = 0, \; \; x \in \Gamma \subset [0,1]^2.\end{align} One of the main challenges of setting up the method in two dimensions and higher is setting up the boundary conditions of the subdomains. We approach this by setting up a coarse grid and shifting it vertically and horizontally $M-1$ times each. Let $$\Omega^H=\{(iH,jH) : i,j=0,1,\ldots N\},$$ and $H=1/N.$ Then the horizontally shifted coarse grids are defined by $$\Omega^H+(x_0,0)=\{(iH+x_0,jH) : i=0,1,\ldots N-1, \; j=0,1,\ldots, N\}$$ where $x_0=lh$ for $l=1, \ldots, M-1$ where $h=1/(MN)$. The vertically shifted coarse grids are defined by $$\Omega^H+(0,y_0)=\{(iH,jH+y_0) : i=0,1,\ldots N, \; j=0,1,\ldots, N-1\}$$ where $y_0=mh$ for $m=1, \ldots, M-1$. The shifted grids are demonstrated in \cref{fig:shifted}. 
\begin{figure}\label{fig:shifted}
\begin{tikzpicture}[scale=0.75]
\draw (-2,2)--(2,2)--(2,-2)--(-2,-2)--(-2,2);
\draw (2,0)--(-2,0);
\draw(0,2)--(0,-2);
\draw[dashed] (-1.75,2)--(-1.75,-2);
\draw[dashed] (-1.5,2)--(-1.5,-2);
\draw[dashed] (.25,2)--(.25,-2);
\draw[dashed] (.5,2)--(.5,-2);
\draw[dashed] (2.25,2)--(2.25,-2);
\draw[dashed] (2.5,2)--(2.5,-2);
\draw[dashed] (-1.75,2)--(2.5,2);
\draw[dashed] (-1.75,-2)--(2.5,-2);
\draw[->](-1.5,-1)--(-1,-1);
\draw[->](-1.5,1)--(-1,1);
\draw (0,-3) node{$\Omega^H+(lh,0)$};
\draw (3.5,0) node{+ };
\end{tikzpicture}\;\;
\begin{tikzpicture}[scale=0.75]
\draw (-2,2)--(2,2)--(2,-2)--(-2,-2)--(-2,2);
\draw (2,0)--(-2,0);
\draw(0,2)--(0,-2);
\draw[dashed] (2,-1.75)--(-2,-1.75);
\draw[dashed] (2,-1.5)--(-2,-1.5);
\draw[dashed] (2,.25)--(-2,.25);
\draw[dashed] (2,.5)--(-2,.5);
\draw[dashed] (2,2.25)--(-2,2.25);
\draw[dashed] (2,2.5)--(-2,2.5);
\draw[dashed] (2,-1.75)--(2,2.5);
\draw[dashed] (-2,-1.75)--(-2,2.5);
\draw[->](-1,-1.5)--(-1,-1);
\draw[->](1,-1.5)--(1,-1);
\draw (0,-3) node{$\Omega^H+(0,mh)$};
\draw (3,0) node{= };
\end{tikzpicture}\;\;
\begin{tikzpicture}[scale=0.75]
\draw[step=.25cm,black,  very thin] (-1.75,1.9) grid (-.25,2.1);
\draw[step=.25cm,black,  very thin] (.25,1.9) grid (1.75,2.1);
\draw[step=.25cm,black,  very thin] (-1.75,-1.9) grid (-.25,-2.1);
\draw[step=.25cm,black,  very thin] (.25,-1.9) grid (1.75,-2.1);
\draw[step=.25cm,black,  very thin] (-1.75,-.1) grid (-.25,.1);
\draw[step=.25cm,black,  very thin] (.25,-.1) grid (1.75,.1);
\draw[step=.25cm,black,  very thin] (-1.9,-1.75) grid (-2.1, -.25);
\draw[step=.25cm,black,  very thin] (-1.9,.25) grid (-2.1, 1.75);
\draw[step=.25cm,black,  very thin] (-.1,-1.75) grid (.1, -.25);
\draw[step=.25cm,black,  very thin] (-.1,.25) grid (.1, 1.75);
\draw[step=.25cm,black,  very thin] (1.9,-1.75) grid (2.1, -.25);
\draw[step=.25cm,black,  very thin] (1.9,.25) grid (2.1, 1.75);
\draw (-2,2)--(2,2)--(2,-2)--(-2,-2)--(-2,2);
\draw (2,0)--(-2,0);
\draw(0,2)--(0,-2);
\draw (0,-3) node{ };
\end{tikzpicture}
\caption{Shifted coarse grids in two dimensions}
\end{figure}
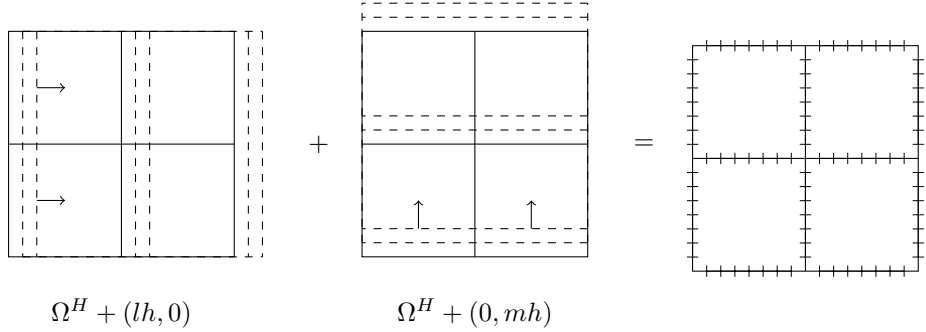
Next we define the fine grids on the subdomains for $i,j=0,1,\ldots, N-1$: $$\Omega^h_{i,j}=\{(lh+iH,mh+jH) : 0\leq l,m\leq M\}.$$ The notation for the two dimensional problem is as follows: \begin{align*} X_{i,j}&=(iH,jH)\in \Omega_H,\\ X_{i_l,j}&=(iH+lh,jH)\in \Omega_H+(lh,0),\\ X_{i,j_m}&=(iH,jH+mh)\in \Omega_H+(0,mh),\\ x_{i_l,j_m}& = (iH+lh,jH+mh) \in \Omega^h_{i,j},\\  U_{i,j}^k& \mbox{ denotes the coarse solution at } X_{i,j} \mbox{ in the } k\mbox{th iteration}, \\ U_{i_l,j}^k& \mbox{ denotes the coarse solution at } X_{i_l,j} \mbox{ in the } k\mbox{th iteration}, \\ U_{i,j_m}^k& \mbox{ denotes the coarse solution at } X_{i,j_m} \mbox{ in the } k\mbox{th iteration}, \\u_{i,j}^k & \mbox{ denotes the fine solution at } X_{i,j} \mbox{ in the } k\mbox{th iteration}, \\ u_{i_l,j_m}^k & \mbox{ denotes the fine solution at } x_{i_l,j_m} \mbox{ in the } k \mbox{th iteration}. \end{align*} Now that we have the grids set up we begin the description of the method. The coarse grid solver is given by the solution to \cref{eq:godonuv}: \begin{equation}C_H(\mbox{nbrs}^H(U_{i,j}))=\begin{cases} \frac{1}{2}\bigg(a+b +\sqrt{2r_{i,j}^2H^2-(a-b)^2}\bigg) & \mbox{ if } |a-b|<r_{i,j}H,\\ \min(a,b)+r_{i,j}H & \mbox{ if } |a-b|\geq r_{i,j}H,\end{cases}\end{equation} where $\mbox{nbrs}^H(U_{i,j})=\{U_{i-1,j},U_{i+1,j},U_{i,j-1},U_{i,j+1}\}$, $a=\min(U_{i-1,j},U_{i+1,j})$, and\\ ${b=\min(U_{i,j-1},U_{i,j+1})}.$  

The steps are the same as in the one dimensional case, except we also have a causal sweep in the initialization step. Step 1 is the initialization of the coarse grids with a causal sweep, step 2 is to update the boundary conditions for the subdomains, and step 3 is to compute the fine solutions on the subdomains in parallel. Step 4 is to perform weighted corrections on the coarse grids  where we allow $\theta$ to vary for each coarse grid node. The weighted update will be: \begin{equation}U_{i,j}^{k+1}=\theta_{i,j}^{k+1} C_H(\mbox{nbrs}^H(U_{i,j}^{k+1}))+u_{i,j}^k-\theta_{i,j}^{k+1} C_H(\mbox{nbrs}^H(U^k_{i,j})).\end{equation}

\subsection*{Step 1: Initialize coarse grids in parallel} Since the shifted coarse grids are independent of each other, the values $\{U^{0}_{i_l,j}\}_{i,j}$ and $\{U^{0}_{i,j_m}\}_{i,j}$ are computed in parallel for each $l$ and $m$.  We solve \cref{eq:2Deq} with boundary conditions \cref{eq:2Dbc} on each of the coarse grids. Keeping track of the flow of characteristics is more complex than in the one dimensional problem. In two dimensions, the set of eight distinct wind direction vectors is $\{(\pm 1,\pm 1), (\pm 1, 0), (0,\pm1)\}$. The wind direction at a coarse grid node is determined by the solution to \cref{eq:godonuv}. We describe how to initialize the grid $\Omega^H$. The initialization on $\Omega^H+(lh,0)$ and $\Omega^H+(0,mh)$ for $l=1,\ldots M-1$ and $m=1,\ldots, M-1$ is the same.
\begin{itemize}
\item Initialize $U$ as described in \cref{sec:FSM}.
\item Sweep the grid as described in \cref{sec:FSM}.  \cref{alg:update2D} explains the update formula as well as how to compute $\mathbf{W}^0_{i,j}$. At each $X_{i,j}$, we input $\mbox{nbrs}^H(U_{i,j}), U_{i,j}$ and $H$, using one sided differences if $X_{i,j}$ is a boundary grid node. 
\end{itemize}
\begin{algorithm}
\caption{Update and wind formula for initialization}
\label{alg:update2D}
\begin{algorithmic}
\STATE{\textbf{Input:} nbrs$^H(U_{i,j}), U_{i,j}, H$}
\STATE{\textbf{Output:} $U_{i,j},\mathbf{W}_{i,j}$}

\COMMENT{Compute wind in $x$ direction.}
\IF{$U_{i-1,j}<U_{i+1,j}$} 
\STATE{$W_x=1$}
\ELSE
\STATE {$W_x=-1$}
\ENDIF

\COMMENT{Compute wind in $y$ direction.}
\IF{$U_{i,j-1}<U_{i,j+1}$}
\STATE{$W_y=1$}
\ELSE
\STATE {$W_y=-1$}
\ENDIF

\COMMENT{Compute in solution to \cref{eq:godonuv} and define $\mathbf{\tilde W}.$}

$\tilde{U}=C_H(\mbox{nbrs}^H(U_{i,j}))$

$a=\min(U_{i-1,j},U_{i+1,j})$

$b=\min(U_{i,j-1},U_{i,j+1})$
\IF{$\tilde U<b$}
\STATE{$\mathbf{\tilde W} = (W_x,0)$}
\ELSIF{$\tilde U <a$}
\STATE{$\mathbf{\tilde W }= (0,W_y)$}
\ELSE
\STATE{$\mathbf{\tilde W }=(W_x,W_y)$}
\ENDIF

\COMMENT{Take minimum.}
\IF{$\tilde U < U_{i,j}$}
\STATE{$U_{i,j}=\tilde U$}
\STATE{$\mathbf{W}_{i,j}=\mathbf{\tilde W }$}
\ENDIF
\end{algorithmic}
\end{algorithm}
The solutions on the coarse grids may have the wrong causality because small scale features in $r_\epsilon$ may be sampled on some shifted coarse grids and not others. To correct this, we implement a causal sweep. We must sweep the coarse grids sequentially in order to capture the right causality. We sweep all the coarse grids in each of the four directions just once. The update is given by inputing $U_{i,j_{m-1}},U_{i,j_{m+1}},U_{i,j_m}, \mathbf{W}_{i,j_m}$ into  \cref{alg:causalsweep2D}, which describes the update for a vertically shifted grid node. The updates for the other coarse grid nodes are defined analogously. Note that since we are sweeping the coarse grids sequentially, $U_{i,j_{m-1}},U_{i,j_{m+1}},$ and $U_{i,j_m}$ belong to three different vertically shifted coarse grids. Denote the solutions after sweeping by $U^0$ and $\mathbf{W}^0$.

\begin{algorithm}
\caption{Causal sweep update formula for vertically shifted coarse grid node}
\label{alg:causalsweep2D}
\begin{algorithmic}
\STATE{\textbf{Input:} $U_{i,j_{m-1}}, U_{i,j_{m+1}},U_{i,j_m},\mathbf{W}_{i,j_m}$}
\STATE{\textbf{Output:} $U_{i,j_m}$}

\IF{$\mathbf{W}_{i,j_m}\cdot (0,-1)>0$ and $U_{i,j_m}< U_{i,j_{m+1}}$}
\STATE{$U_{i,j_m}=U_{i,j_{m+1}}$}
\ENDIF
\IF{$\mathbf{W}_{i,j_m}\cdot (0,1)>0$ and $U_{i,j_m}< U_{i,j_{m-1}}$}
\STATE{$U_{i,j_m}=U_{i,j_{m-1}}$}
\ENDIF
\end{algorithmic}
\end{algorithm}

\subsection*{Step 2: Update boundary conditions for subdomains} 
 Now that we have computed the solutions on all the coarse grids, we can set the boundary conditions for each $\Omega_{i,j}^h$. Intuitively, if a characteristic at a coarse grid point is arriving into the boundary of the subdomain, $\partial \Omega_{i,j}^h$, then we set $u$ at that node to be the value from the coarse grid computations, $U^k$. Otherwise, we set $u$ to be $\infty$ at the coarse grid point. To describe this mathematically for a vertically shifted coarse grid point, define $\mathbf{n}_{w,r_m}$ to be the inward normal vector to the subdomain $\Omega^h_{i,j}$ at $X_{w,r_m} \in \partial \Omega_{i,j}^h$. Then define $$g_{w,r_m}(U_{w,r_m},\mathbf{W}_{w,r_m}):=\begin{cases} U_{w,r_m} &  \mbox{ if } \mathbf{W}_{w,r_m}\cdot \mathbf{n}_{w,r_m} > 0 \\ \infty & \mbox{ otherwise}. \end{cases}$$ When $X_{w_l,r}$ is a horizontally shifted grid point, the definition of $g_{w_l,r}$ is the same as above. For $X_{w,r}$, a non-shifted coarse grid point on $\partial \Omega_{i,j}^h$, the inward normal vector of $\Omega_{i,j}^h$ is not unique since the coarse grid point is a corner of the subdomain. There are two possibilities for the inward normal vector. Denote them by $\mathbf{n}_{w,r}^1$ and $\mathbf{n}_{w,r}^2$, then $$g_{w,r}(U_{w,r}^k,\mathbf{W}_{w,r}^k)=\begin{cases} U_{w,r}^k &  \mbox{ if } \mathbf{W}_{w,r}^k\cdot \mathbf{n}_{w,r}^1 > 0 \mbox{ or }\mathbf{W}_{w,r}^k\cdot \mathbf{n}_{w,r}^2 > 0  \\ \infty & \mbox{ otherwise}. \end{cases}$$

\subsection*{Step 3: Solve for $u^k$ in parallel} In parallel for $i,j=0,\ldots, N-1$, we solve \begin{align} |\nabla u(x) | &=r(x), \; \; x\in [iH,(i+1)H]\times[jH,(j+1)H] \\ u&=g,\; \; \mbox{on } \partial ( [iH,(i+1)H]\times[jH,(j+1)H] )\end{align} via FSM on the grid $\Omega_{i,j}^h$ and $g$ is defined in step 2.
\begin{itemize} 
\item Initialize $u$ as described in \cref{sec:FSM}
\item Sweep the grid. To the update the solution at each $x_{i_l,j_m}$, input $\mbox{nbrs}^h(u_{i_l,j_m})$, $u_{i_l,j_m}$, and $h$ into \cref{alg:fineupdate2D}. Use one sided differences if $x_{i_l,j_m}$ is a boundary grid node. Here, $\mbox{nbrs}^h(u_{i_l,j_m})=\{u_{i_{l-1},j_m},u_{i_{l+1},j_m},u_{i_l,j_{m-1}},u_{i_l,j_{m+1}}\}$. \end{itemize}
Denote the solutions after sweeping by $u^k_{i_l,j_m}$ and $\mathbf{w}^k_{i_l,j_m}$ for $l,m=0,\ldots M$.

\begin{algorithm}
\caption{Update and wind formula for fine grid compuations}
\label{alg:fineupdate2D}
\begin{algorithmic}
\STATE{\textbf{Input:} $\mbox{nbrs}^h(u_{i_l,j_m}), u_{i_l,j_m}, h$}
\STATE{\textbf{Output:} $u_{i_l,j_m},\mathbf{w}_{i_l,j_m}$}

\COMMENT{Compute wind in $x$ direction.}
\IF{$u_{i_{l-1},j}<u_{i_{l+1},j}$} 
\STATE{$a=u_{i_{l-1},j}$}
\STATE{$w_x=-1$}
\ELSE
\STATE{$a=u_{i_{l+1},j}$}
\STATE {$w_x=1$}
\ENDIF

\COMMENT{Compute wind in $y$ direction.}
\IF{$u_{i,j_{m-1}}<u_{i,j_{m+1}}$}
\STATE{$b=u_{i,j_{m-1}}$}
\STATE{$w_y=-1$}
\ELSE
\STATE{$b=u_{i,j_{m+1}}$}
\STATE {$w_y=1$}
\ENDIF

\COMMENT{Solve \cref{eq:godonuv}.}

\IF{$|a-b|<r_{i,j}h$}
\STATE{$\tilde u= \frac{1}{2}\bigg(a+b +\sqrt{2r_{i,j}^2h^2-(a-b)^2}\bigg)$}
\STATE{$\mathbf{\tilde w }=(w_x,w_y)$}
\ELSE
\STATE{$ \tilde u=\min(a,b)+r_{i,j}h$}
\STATE{ \IF{$a<b$} \STATE{$\mathbf{\tilde w} = (w_x,0)$} \ELSE \STATE{$\mathbf{\tilde w }= (0,w_y)$} \ENDIF}
\ENDIF

\COMMENT{Take minimum.}
\IF{$\tilde u < u_{i_l,j_m}$}
\STATE{$u_{i_l,j_m}=\tilde u$}
\STATE{$\mathbf{w}_{i_l,j_m}=\mathbf{\tilde w }$}
\ENDIF
\end{algorithmic}
\end{algorithm}

After the computations on each subdomain, we will have two or four values for each coarse grid node, depending on whether the point is in a shifted or non-shifted coarse grid. Intuitively, we define the value $u_{i,j}^k$ by the following:
\begin{itemize} 
\item If the coarse wind and the fine wind flow into the same subdomain $\Omega_{s,t}^h$ from $\Omega_{s',t'}^h$, then we set the value $u_{i,j}^k$ to be the fine grid solution from the subdomain $\Omega_{s',t'}^h$. 
\item Otherwise we set $u_{i,j}^k$ to be the minimum of the fine grid solutions at the coarse grid point.
\end{itemize}
 A vertically shifted coarse grid node, $X_{i,j_m}$, is on the boundary of the two subdomains, $\Omega_{i-1,j'}^h$ and $\Omega_{i,j'}^h$. Denote the two possibilities of an inward normal vector by ${\mathbf{n}^1=(-1, 0)}$ and $\mathbf{n}^2=(1,0)$.  \cref{alg:fineupdate} explains how to compute $u_{i,j_m}^k$ at a vertically shifted coarse grid node, $X_{i,j_m}$. The computations  at a horizontally shifted and non shifted coarse grid point are similar. 
\begin{algorithm}
\caption{Update formula for $u_{i,j_m}^k$ and $\mathbf{w}_{i,j_m}^k$ for a vertically shifted coarse grid node}
\label{alg:fineupdate}
\begin{algorithmic}
\STATE{\textbf{Input:} $u_{{i-1}_M,j_m}^k, u_{i_0,j_m}^k, \mathbf{w}_{{i-1}_M,j_m}^k, \mathbf{w}_{i_0,j_m}^k, \mathbf{W}_{i,j_m}^k$}
\STATE{\textbf{output:} $u_{i,j_m}^k, \mathbf{w}_{i,j_m}^k$}
\IF{$\mathbf{w}_{{i-1}_M,j_m}^k \cdot \mathbf{n}^1 \geq 0, \mathbf{w}_{i_0,j_m}^k \cdot  \mathbf{n}^1 \geq 0 ,$ and $\mathbf{W}_{i,j_m}^k \cdot \mathbf{n}^1\geq 0$}
\STATE{$u_{i,j_m}^k=u_{{i}_0,j_m}^k$}
\STATE{$\mathbf{w}_{i,j_m}^k=\mathbf{w}_{{i}_0,j_m}^k$}
\ELSIF{$\mathbf{w}_{{i-1}_M,j_m}^k \cdot \mathbf{n}^2 \geq 0, \mathbf{w}_{i_0,j_m}^k \cdot  \mathbf{n}^2 \geq 0 ,$ and $\mathbf{W}_{i,j_m}^k \cdot \mathbf{n}^2\geq 0$}
\STATE{$u_{i,j_m}^k=u_{{i-1}_M,j_m}^k$}
\STATE{$\mathbf{w}_{i,j_m}^k=\mathbf{w}_{{i-1}_M,j_m}^k$}
\ELSE 
\STATE{\IF{$u^k_{{i-1}_M,j_m}\leq u_{i_0,j_m}^k$}
	\STATE{$u_{i,j_m}^k=u^k_{{i-1}_M,j_m}$}
	\STATE{$\mathbf{w}_{i,j_m}^k=\mathbf{w}_{{i-1}_M,j_m}^k$}
	\ELSE
	\STATE{$u_{i,j_m}^k=u^k_{i_0,j_m}$}
	\STATE{$\mathbf{w}_{i,j_m}^k=\mathbf{w}_{i_0,j_m}^k$}
	\ENDIF
}
\ENDIF
\end{algorithmic}
\end{algorithm}
\subsection*{Step 4: Coarse grid updates} Now we compute the coarse grid updates, $U^{k+1}$. Again since the shifted coarse grids are independent of each other, the values $\{U^{k+1}_{i_l,j}\}_{i,j}$ and $\{U^{k+1}_{i,j_m}\}_{i,j}$ can be computed in parallel for each $l$ and $m$.
\begin{itemize}
\item Initialize $U$ as described in  \cref{sec:FSM}. 
\item Sweep the grid and the update formula at a vertically shifted coarse grid node is given by \cref{alg:2Dweightedupdate}. The computations at a horizontally shifted and non shifted coarse grid point are similar. Let $\mathbf{n}^1$ and $\mathbf{n}^2$ be the inward normal vectors as defined in step 3. We input $\mbox{nbrs}^H(U_{i,j_m}), \mbox{nbrs}^H(U^k_{i,j_m}),\mathbf{W}_{i,j_m}^k, \mathbf{w}_{i,j_m}^k$ and $H$ into  \cref{alg:2Dweightedupdate}.
\end{itemize}

\begin{algorithm}
\caption{Update formula for weighted corrections for a vertically shifted coarse grid node}
\label{alg:2Dweightedupdate}
\begin{algorithmic}
\STATE{\textbf{Input:} $\mbox{nbrs}^H(U_{i,j_m}), \mbox{nbrs}^H(U^k_{i,j_m})$, $W_{i,j_m}^k, w_{i,j_m}^k, H$}
\STATE{\textbf{Output:} $U_{i,j_m},W_{i,j_m}$}

\STATE{Compute $\tilde U$ and $\tilde W$ as in \cref{alg:update2D}}

\IF{$\mathbf{w}_{i,j_m}^k \cdot \mathbf{n}^1 \geq 0, \mathbf{W}_{i,j_m}^k \cdot  \mathbf{n}^1 \geq 0 ,$ and $\mathbf{\tilde W} \cdot \mathbf{n}^1\geq 0$}
\STATE{$U_{i,j_m}=\theta_{i,j_m}^{k+1} \tilde U +u_{i,j_m}^k-\theta_{i,j_m}^{k+1}  C_H(\mbox{nbrs}^H(U^k_{i,j_m}))$}
\STATE{$\mathbf{W}_{i,j_m}=\mathbf{w}_{i,j_m}^k$}
\ELSIF{$\mathbf{w}_{i,j_m}^k \cdot \mathbf{n}^2 \geq 0, \mathbf{W}_{i,j_m}^k \cdot  \mathbf{n}^2 \geq 0 ,$ and $\mathbf{\tilde W} \cdot \mathbf{n}^2\geq 0$}
\STATE{$U_{i,j_m}=\theta_{i,j_m}^{k+1}  \tilde U +u_{i,j_m}^k-\theta_{i,j_m}^{k+1}  C_H(\mbox{nbrs}^H(U^k_{i,j_m}))$}
\STATE{$\mathbf{W}_{i,j_m}=\mathbf{w}_{i,j_m}^k$}
\ELSE
\STATE{$U_{i,j_m}=u_{i,j_m}^k$}
\STATE{$\mathbf{W}_{i,j_m}=\mathbf{w}_{i,j_m}^k$}
\ENDIF

\end{algorithmic}
\end{algorithm}

Again we must implement a sequential causal sweep to make sure the coarse grids respect the causality of the solution. Sweep the coarse grids sequentially in each of the four directions once. The update formula is given by inputing $U_{i,j_{m-1}},U_{i,j_{m+1}},U_{i,j_m}, \\\mathbf{W}_{i,j_m}$ into  \cref{alg:causalsweep2D} for a vertically shifted coarse grid point. The updates for other coarse grid nodes are defined similarly. Denote the solutions after sweeping by $U^{k+1}$ and $\mathbf{W}^{k+1}$. Repeat steps 2-4 until convergence.
\begin{figure}[h]
\centering
\subfloat[$k=0$]{\includegraphics{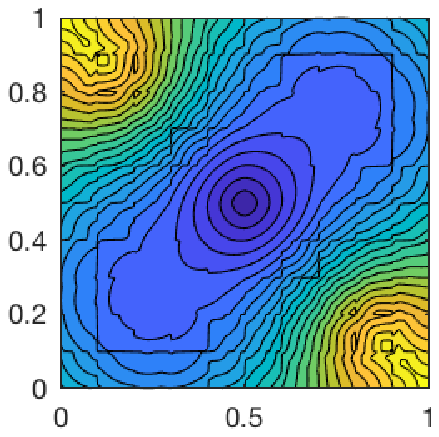}\label{fig:a12}}
\subfloat[$k=2$]{\includegraphics{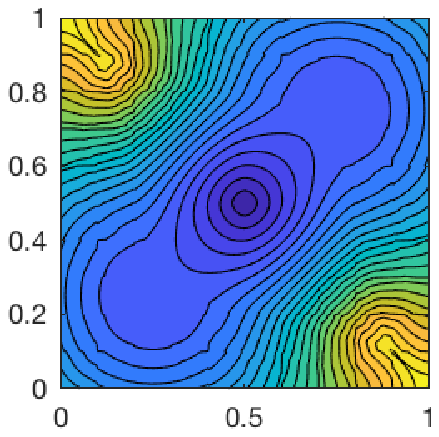}\label{fig:b12}}\\
\subfloat[$k=4$]{\includegraphics{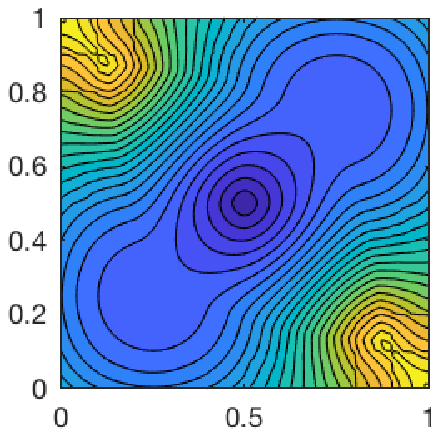}\label{fig:c12}}
\subfloat[$k=6$]{\includegraphics{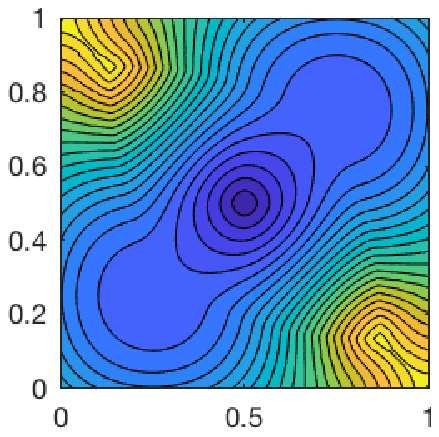}\label{fig:d12}}
\caption{Contours for fine grid solutions patched together for the slowness function ${r^1_\epsilon(x,y)=1+.99\sin(2\pi x)\sin(2\pi y)}$ where in  \protect\subref{fig:a12}  $k=0$, \protect\subref{fig:b12} $k=2$,\protect\subref{fig:c12} $k=4$ and \protect\subref{fig:d12} $k=6$.}
\label{fig:solutioncontoursk}
\end{figure}

The method is demonstrated in \cref{fig:solutioncontoursk} which shows the contours for the fine grid solution patched together for ${r^1_\epsilon=1+.99\sin(2\pi x)\sin(2\pi y)}$ for $k=0,2,4$ and $6$. We see the solution contours begin to smooth out after a few iterations.

\section{Analysis of the new method}
\label{sec:analysis}
\begin{figure}[h]
\centering
\includegraphics{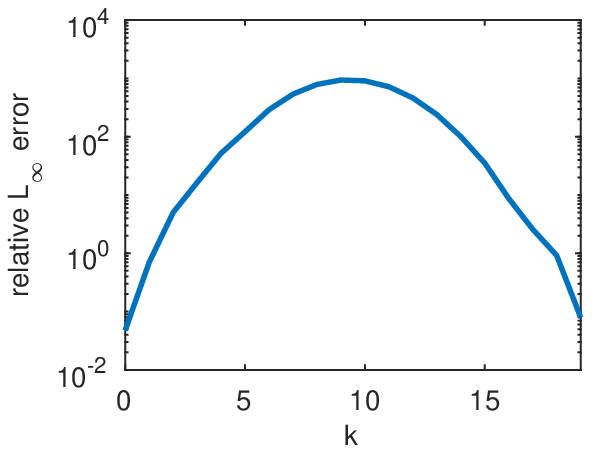}
\caption{$||U^k-u^f||_{L_\infty}$ for $\theta=1, H=1/20, h=1/1000$.}
\label{fig:theta1}
\end{figure}
We choose the following model problem to study the choice of weight $\theta$. Let $\Omega =[0,1]\times[0,H ]$ Then we numerically solve via our method \begin{align} |\nabla u(x,y)| & =1, \; \; (x,y)\in \Omega \backslash \Gamma \\  u(x,y)& =\sqrt{x^2+y^2}, \;\; (x,y) \in \Gamma  \end{align} where $\Gamma= \{(x,0): 0\leq x \leq 1\}\cup \{(0,y): 0\leq y \leq H\}$. The coarse grids can be defined in one set by $$\Omega^H:= \{(iH, jh):i=0,1\ldots,N \mbox{ and } j=0,1,\ldots,M\}$$ with $X_{i,j}=(iH,jh)$. The overall fine grid is given by $$\Omega^h=\{ (lh,mh): l=0,1,\ldots, NM \mbox{ and } m=0,1,\ldots M \}.$$ The advantage of this problem is that the characteristics can be captured in one sweep of FSM, i.e., an upward right sweep. This fact means we can use a weighted correction update for every coarse grid node. Let $u^f$ be the overall fine solution on $\Omega^h$. Suppose we allow $\theta$ to vary for each coarse grid node and iteration and denote it by $\theta_{i,j}^k$. Then for $i=1,\ldots,  N$ and $j=1,\ldots, M$, we have the following coarse grid solver:  $$ C_H(U_{i-1,j},U_{i,j-M})=\begin{cases} \frac{U_{i-1,j}+U_{i,j-M}+\sqrt{2H^2-(U_{i-1,j}-U_{i,j-M})^2}}{2}, & j= M \\ U_{i-1,j}+H, & \mbox{ otherwise }\end{cases},$$ where if $j=1,\ldots, M-1$ we ignore the second argument of the coarse grid solver. Let $U_{i,j}^0= C_H(U^0_{i-1,j},U^0_{i,j-M})$. The weighted update for this problem for $j=1,\ldots M:$ \begin{equation}\label{eq:modelprob} U_{i,j}^{k+1}=\theta_{i,j}^{k+1} \bigg [C_H(U_{i-1,j}^{k+1},U_{i,j-M}^{k+1}) -C_H(U_{i-1,j}^{k},U_{i,j-M}^{k})\bigg ] + u_{i,j}^{k}\end{equation} with initial conditions \begin{equation}\label{eq:modelBC1} U_{0,j}^{k+1}=u_{0,j}^f  \mbox{ for } j=1, \ldots, M \mbox{ and } k=0,1,2\ldots \end{equation} and \begin{equation}\label{eq:modelBC2} U_{i,0}^{k+1}=u_{i,0}^f \mbox{ for }  i=1, \ldots, N \mbox{ and } k=0,1,2.\ldots\end{equation}

 \cref{fig:theta1} shows the $L_\infty$ error plot for $\theta= 1$ which is analogous to the standard parareal method. Note the error is large from the first iteration and increases for later iterations. The error peaks around $k=10$. This is because as $k$ increases the solutions in each successive subdomain converge to the exact solution  which then allows the maximum error to begin to decrease.  If we choose a small value for $\theta$, the solutions converge as seen in \cref{fig:vartheta}. However, the convergence may be slow. Next, we study how to choose $\theta_{i,j}^k$ for the coarse grid updates.

\subsection{Analysis of $\theta$ on model problem}

First we prove a theorem that gives an exactness property for the method on this model problem. Let $u^f$ be the overall fine solution on $\Omega^h$.
\begin{theorem}\label{thm:exactness}
Let $U_{i,j}^k$ be given by \cref{eq:modelprob}. Then for each $j=1,\ldots M,$ $$U_{i,j}^k=u_{i,j}^f \mbox{ for } k \geq i.$$ \end{theorem}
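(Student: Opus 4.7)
I would argue by induction on the column index $i$, with induction hypothesis $P(i)$: for every $k \geq i$ and every $j \in \{0, 1, \ldots, M\}$, $U_{i,j}^k = u_{i,j}^f$. The base case $P(0)$ is immediate: \cref{eq:modelBC1} forces $U_{0,j}^k = u_{0,j}^f$ for $j \geq 1$ and every $k$, and the corner $U_{0,0}^k$ is a boundary node with fixed value $u_{0,0}^f = 0$.

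For the inductive step, assume $P(i-1)$ and fix $k \geq i$. Since $k$ and $k-1$ are both $\geq i-1$, the hypothesis gives $U_{i-1,j}^k = U_{i-1,j}^{k-1} = u_{i-1,j}^f$ for every $j$. The only bottom neighbor that enters the coarse solver is $U_{i,0}$ (which appears when $j = M$), and \cref{eq:modelBC2} fixes this to $u_{i,0}^f$ at every iteration. Substituting into \cref{eq:modelprob}, the two arguments of the two $C_H$ evaluations coincide, so the weighted correction telescopes to zero \emph{regardless} of the weight $\theta_{i,j}^{k}$. This leaves $U_{i,j}^k = u_{i,j}^{k-1}$ for $j \geq 1$, while $U_{i,0}^k = u_{i,0}^f$ is direct from \cref{eq:modelBC2}.

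It then remains to show $u_{i,j}^{k-1} = u_{i,j}^f$. The fine subproblem posed on the $i$th subdomain at iteration $k-1$ is driven by the left-edge trace $\{U_{i-1,j}^{k-1}\}_{j=0}^M$, which by $P(i-1)$ equals the restriction of $u^f$ to that edge, together with the exact bottom BC from \cref{eq:modelBC2} and $\infty$ on the right and top. Because every characteristic of the model problem is directed up and to the right, no information enters the subdomain across the right or top edges, so the $\infty$ outflow BCs are inactive; the monotone causal Godunov upwind scheme, swept once up-right, therefore reproduces $u^f$ exactly on the subdomain. This yields $u_{i,j}^{k-1} = u_{i,j}^f$ and completes the induction.

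The routine part of the argument is the telescoping observation that the weighted correction vanishes as soon as the inflow profile stabilizes, independently of $\theta$. The more delicate point, which I would treat carefully in a full write-up, is verifying that the subdomain solver with exact inflow data and $\infty$ outflow data returns the restriction of $u^f$ to the subdomain; this ultimately rests on the fact that $a^+$ and $c^+$ realize the maxima in the Godunov expression \cref{eq:godonuv} along every up-right characteristic of this particular problem, combined with the uniqueness of the upwind solution on the subdomain.
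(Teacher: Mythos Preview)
Your proposal is correct and follows essentially the same approach as the paper: induction on the column index $i$, observing that once $U_{i-1,j}^{k}=U_{i-1,j}^{k-1}=u_{i-1,j}^f$ (and the bottom value is fixed by \cref{eq:modelBC2}) the $C_H$ terms in \cref{eq:modelprob} cancel irrespective of $\theta$, reducing to $U_{i,j}^k=u_{i,j}^{k-1}$, and then noting that the fine solve on subdomain $i$ with exact inflow data reproduces $u^f$. Your write-up is somewhat more explicit than the paper's about why the fine subproblem returns $u^f$ (the outflow $\infty$ boundaries being inactive for up-right characteristics), but the underlying argument is identical.
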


\begin{proof}
First note $U_{0,j}^1=U_{0,j}^0$ and $U_{1,0}^1=U_{1,0}^0=u_{1,0}^f$.  Now,  \begin{align*} U_{1,j}^1 & =\theta_{1,j}^1\bigg [C_H(U_{0,j}^1,U_{1,j-M}^1)-C_H(U_{0,j}^0,U_{1,j-M}^0)\bigg ]+u_{1,j}^0 \\ & = u_{1,j}^0 \\ &=u_{1,j}^f.\end{align*} The second equality comes from the fact that $u_{1,j}^0$ was computed using only the boundary values.

Now assume  \begin{equation}\label{eq:exactHYP} U_{i,j}^k=u_{i,j}^f \mbox{ for } k \geq i. \end{equation} Let $k \geq i+1$. We have $$U_{i+1,j}^k=\theta_{i+1,j}^k\bigg [C_H(U_{i,j}^k,U_{i+1,j-M}^k)-C_H(U_{i,j}^{k-1},U_{i+1,j-M}^{k-1})\bigg ] + u_{i+1,j}^{k-1}.$$ Now $k\geq i+1$ and  \cref{eq:exactHYP} imply $U_{i,j}^k=U_{i,j}^{k-1}=u_{i,j}^f.$ Also, \cref{eq:modelBC2} imply $$U_{i+1,0}^k=U_{i+1,0}^{k-1}=u_{i+1,0}^f.$$ Therefore, $$U_{i+1,j}^k=u_{i+1,j}^{k-1}=u_{i+1,j}^f,$$ where second equality comes from  the fact that $u_{i+1,j}^{k-1}$ is computed using the values $U_{i,j}^{k-1}$ and \cref{eq:exactHYP} implies $U_{i,j}^{k-1}=u_{i,j}^f$ for $j=0,\ldots,M-1$. Thus, we have our desired result.
\end{proof}

Now that the exactness property for the method is proven, we have the following theorem that proves existence of $\theta_{i,j}^k$ for each $k$  such that the sequence of solutions is monotonically decreasing for the model problem. The following fact is used in the proof of the theorem: If $a\leq b \leq c$, then $C_H(a,c)\leq C_H(b,c)$.
\begin{theorem}\label{thm:choosetheta} For $j=1,\ldots, M$ and  $i=1,\ldots,N$, there exists $\theta_{i,j}^k$ such that $U_{i,j}^{k} < U_{i,j}^{k-1}$ and $U_{i,j}^k> u^f_{i,j}$ for all  $i> k$. 
\end{theorem}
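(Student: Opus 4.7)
The plan is a double induction: an outer induction on the iteration index $k$, and, within each iteration, an inner induction on the coarse index $i$ starting from $i=k+2$. The values at $i\le k+1$ are already pinned down either by the Dirichlet data \cref{eq:modelBC1}--\cref{eq:modelBC2} or by \cref{thm:exactness}, so the strict inequalities claimed by the theorem only need to be established in the region $i>k+1$ at iteration $k+1$. The central algebraic identity is the rewriting of the update \cref{eq:modelprob} as
\begin{equation*}
U^{k+1}_{i,j}-u^f_{i,j}=\bigl(u^k_{i,j}-u^f_{i,j}\bigr)+\theta^{k+1}_{i,j}B_{i,j},\qquad U^{k+1}_{i,j}-U^k_{i,j}=\bigl(u^k_{i,j}-U^k_{i,j}\bigr)+\theta^{k+1}_{i,j}B_{i,j},
\end{equation*}
where $B_{i,j}:=C_H(U^{k+1}_{i-1,j},U^{k+1}_{i,j-M})-C_H(U^k_{i-1,j},U^k_{i,j-M})$. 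The goal is to show that $B_{i,j}<0$ strictly and that $u^f_{i,j}<u^k_{i,j}\le U^k_{i,j}$; any $\theta^{k+1}_{i,j}$ in the open interval $(0,(u^k_{i,j}-u^f_{i,j})/|B_{i,j}|)$ will then deliver both required strict inequalities at the grid node $X_{i,j}$.

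For the sign of $B_{i,j}$, the plan is to use the monotonicity of $C_H$ stated just before the theorem together with strict comparisons at the smaller index $i-1$. When $i-1=k+1$, \cref{thm:exactness} gives $U^{k+1}_{i-1,j}=u^f_{i-1,j}$, while the outer induction hypothesis at level $k$ gives $U^k_{i-1,j}>u^f_{i-1,j}$; when $i-1\ge k+2$, the inner induction at level $k+1$ supplies the same strict ordering $U^{k+1}_{i-1,j}<U^k_{i-1,j}$. The second argument $U_{i,j-M}$ is relevant only for $j=M$, in which case $j-M=0$ and the value is pinned down by \cref{eq:modelBC2}, so it is identical at iterations $k$ and $k+1$ and does not contribute to $B_{i,j}$. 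Strict monotonicity of $C_H$ in its first argument then yields $B_{i,j}<0$, and any positive $\theta^{k+1}_{i,j}$ automatically forces $U^{k+1}_{i,j}<u^k_{i,j}\le U^k_{i,j}$; shrinking $\theta^{k+1}_{i,j}$ further below the threshold $(u^k_{i,j}-u^f_{i,j})/|B_{i,j}|$ preserves the lower bound $U^{k+1}_{i,j}>u^f_{i,j}$.

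The hard part will be justifying the two bounds $u^k_{i,j}\le U^k_{i,j}$ and $u^k_{i,j}>u^f_{i,j}$ on which the choice of $\theta^{k+1}_{i,j}$ hinges. The first reflects the familiar fact that refining an upwind Godunov discretization in the interior cannot produce larger values than the coarser one when the same boundary data are used. The second is the key quantitative ingredient: the fine solve producing $u^k_{i,j}$ takes as Dirichlet data the coarse values $U^k_{i-1,j'}$ along the inflow face $x=(i-1)H$ of the subdomain $[(i-1)H,iH]\times[0,H]$; since $i-1>k$, the outer induction hypothesis supplies $U^k_{i-1,j'}>u^f_{i-1,j'}$ strictly at those boundary nodes. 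The monotone dependence of the fine Godunov solver on its Dirichlet data will then propagate this strict overestimation into the interior, yielding $u^k_{i,j}>u^f_{i,j}$ and providing the positive slack needed so that a positive $\theta^{k+1}_{i,j}$ is compatible with the strict lower bound, closing both inductions simultaneously.
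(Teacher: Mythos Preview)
Your overall plan---double induction on $k$ and then on $i$, together with the monotonicity of $C_H$ to pin down the sign of $B_{i,j}$---is exactly the route the paper takes. There is, however, a genuine gap.

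You assert $u^k_{i,j}\le U^k_{i,j}$ and justify it as ``the familiar fact that refining an upwind Godunov discretization in the interior cannot produce larger values than the coarser one when the same boundary data are used.'' That reasoning does not apply here: for $k\ge 1$ the value $U^k_{i,j}$ is \emph{not} the coarse Godunov solution with left data $U^k_{i-1,\cdot}$; it is the parareal-corrected value $u^{k-1}_{i,j}+\theta^k_{i,j}B_{i,j}$, which depends on the weights already chosen. In particular, if at step $k$ the weights were taken close to their upper bounds (so that $U^k_{i,j}$ sits only slightly above $u^f_{i,j}$), then $u^k_{i,j}>u^f_{i,j}$ readily gives $u^k_{i,j}>U^k_{i,j}$. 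The paper's proof treats this case explicitly: when $u^k_{i,j}>U^k_{i,j}$ one must take
\[
\theta^{k+1}_{i,j}>\overline m^{k+1}_{i,j}:=\frac{U^k_{i,j}-u^k_{i,j}}{B_{i,j}}>0,
\]
not merely some small positive $\theta$, in order to force $U^{k+1}_{i,j}<U^k_{i,j}$. Your proposed interval $\bigl(0,(u^k_{i,j}-u^f_{i,j})/|B_{i,j}|\bigr)$ is therefore too wide on the left in that case.

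This omitted case is not a technicality: the lower bound $\overline m^{k+1}_{i,j}$ is precisely the quantity the paper exploits immediately after the theorem when discussing the optimal choice of weights. To repair the argument, split into the two cases $u^k_{i,j}\le U^k_{i,j}$ and $u^k_{i,j}>U^k_{i,j}$ as the paper does; in the second case note that $\overline m^{k+1}_{i,j}<\overline M^{k+1}_{i,j}$ because $u^f_{i,j}<U^k_{i,j}$, so the admissible interval $(\overline m^{k+1}_{i,j},\overline M^{k+1}_{i,j})$ is nonempty and any $\theta^{k+1}_{i,j}$ in it delivers both strict inequalities.
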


\begin{proof}
First we note $U_{i,j}^0 > u_{i,j}^f$ and  $u_{i,j}^0 \geq u_{i,j}^f$ for $i=1,\ldots N$ and $j=1,\ldots, M $. We will proceed by induction on $k$. Let $k=1$. We will show the theorem holds for all $i > 1$. Either $u_{2,j}^0 > U_{2,j}^0$ or $u_{2,j}^0 \leq U_{2,j}^0$. If $u_{2,j}^0 \leq U_{2,j}^0$, choose $\theta_{2,j}^1>0$. Then \begin{align*} U_{2,j}^1&=\theta_{2,j}^1 \bigg[C_H(U_{1,j}^1,U_{2,j-M}^1)-C_H(U_{1,j}^0,U_{2,j-M}^0)\bigg ]+u_{2,j}^0\\& < u_{2,j}^0 \\ & \leq U_{2,j}^0\end{align*}  where the first inequality comes from the fact that $U_{1,j}^1=u_{1,j}^f < U_{1,j}^0$. If $u_{2,j}^0>U_{2,j}^0$, then define $$\overline{m}_{2,j}^1=\frac{U_{2,j}^0-u_{2,j}^0}{C_H(U_{1,j}^1,U_{2,j-M}^1)-C_H(U_{1,j}^0,U_{2,j-M}^0)}.$$ Now $\overline{m}_{2,j}^1>0$. Thus, if $\theta_{2,j}^1 > \overline{m}_{2,j}^1$, then $U_{2,j}^1<U_{2,j}^0$. Now let $$\overline{M}_{2,j}^{1}=\frac{u_{2,j}^f-u_{2,j}^0}{C_H(U_{1,j}^1,U_{2,j-M}^1)-C_H(U_{1,j}^0,U_{2,j-M}^0)}.$$ Note $\overline{M}_{2,j}^{1}>0$. If we choose $\theta_{2,j}^1<\overline{M}_{2,j}^1$, then $U_{2,j}^1>u_{2,f}^f$.

Now assume $U_{i,j}^1>U_{i,j}^0$ and $U_{i,j}^1>u_{i,j}^f$ for all $i>1$. If $u_{i+1,j}^0 \leq U_{i+1j}^0$, choose $\theta_{i+1,j}^1>0$. Then \begin{align*} U_{i+1,j}^1& =\theta_{i+1,j}^1\bigg[C_H(U_{i,j}^1,U_{i+1,j-M}^1)-C_H(U_{i,j}^0,U_{i+1,j-M}^0)\bigg ] +u_{i+1,j}^0 \\ & < u_{i+1,j}^0 \\ & \leq  U_{i+1,j}^0, \end{align*} where the first inequality comes from the induction assumption. If $u_{i+1,j}^0 > U_{i+1j}^0$, then let $$\overline{m}_{i+1,j}^1=\frac{U_{i+1,j}^0-u_{i+1,j}^0}{C_H(U_{i,j}^1,U_{i+1,j-M}^1)-C_H(U_{i,j}^0,U_{i+1,j-M}^0)}$$ Now $\overline{m}_{i+1,j}^1>0$. Thus, if $\theta_{i+1,j}^1 > \overline{m}_{i+1,j}^1$, then $U_{2,j}^1<U_{2,j}^0$. Next let $$\overline{M}_{i+1,j}^1=\frac{u_{i+1,j}^f-u_{i+1,j}^0}{C_H(U_{i,j}^1,U_{i+1,j-M}^1)-C_H(U_{i,j}^0,U_{i+1,j-M}^0)}.$$ Note $\overline{M}_{i+1,j}^{1}>0$. If we choose $\theta_{i+1,j}^1 < \overline{M}_{i+1,j}^1$, then $U_{i+1,j}^1>u_{i+1,j}^f.$  So if \\${0<\theta_{i+1,j}^1<\overline{M}_{i+1,j}^1}$, the theorem holds for $k=1$.

Assume the theorem holds for $k$, i.e., $U_{i,j}^k< U_{i,j}^{k-1}$ and $U_{i,j}^k>u_{i,j}^f$ for $i>k$. We want to show  it holds  $i>k+1$. Let $i=k+2$. The induction hypothesis implies $U_{k+1,j}^k>u_{k+1,j}^f$ and \cref{thm:exactness}  implies $U_{k+1,j}^{k+1}=u_{k+1,j}^f$. Thus, $$C_H(U_{k+1,j}^{k+1},U_{k+2,j-M}^{k+1})-C_H(U_{k+1,j}^k,U_{k+2,j-M}^k) <0.$$ If $u_{k+2,j}^k\leq U_{k+2,j}^k$, choose $\theta_{k+2,j}^{k+1}>0$. Then  \begin{align*} U_{k+2,j}^{k+1}& =\theta_{k+2,j}^{k+1}\bigg[C_H(U_{k+1,j}^{k+1},U_{k+2,j-M}^{k+1})-C_H(U_{k+1,j}^k,U_{k+2,j-M}^k)\bigg ] +u_{k+1,j}^k \\ & < u_{k+2,j}^k \\ & \leq U_{k+2,j}^k\end{align*} If $u_{k+2,j}^k>U_{k+2,j}^k$, let $$\overline{m}_{k+2,j}^{k+1}=\frac{U_{k+2,j}^k-u_{k+2,j}^k}{C_H(U_{k+1,j}^{k+1},U_{k+2,j-M}^{k+1})-C_H(U_{k+1,j}^k,U_{k+2,j-M}^k)}.$$ Note $\overline{m}_{k+2,j}^{k+1}>0$. If $\theta_{k+2,j}^{k+1}>\overline{m}_{k+2,j}^{k+1}$, then $U_{k+2,j}^{k+1}<U_{k+2,j}^k.$ 

Next we need ${U_{k+2,j}^{k+1}>u_{k+2,j}}^f$. Let $$\overline{M}_{k+2,j}^{k+1}=\frac{u_{k+2,j}^f-u_{k+2,j}^k}{C_H(U_{k+1,j}^{k+1},U_{k+2,j-M}^{k+1})-C_H(U_{k+1,j}^k,U_{k+2,j-M}^k)}.$$ Note $\overline{M}_{k+2,j}^{k+1}$. Then if $\theta_{k+2,j}^{k+1}<\overline{M}_{k+2,j}^{k+1}$, $U_{k+2,j}^{k+1}>u_{k+2,j}^f.$ So if $u_{k+2,j}^k\leq U_{k+2,j}^k,$ choose $0< \theta_{k+2,j}^{k+1}<\overline{M}_{k+2,j}^{k+1}$. If $u_{k+2,j}^k>U_{k+2,j}^k,$ choose $\overline{m}_{k+2,j}^{k+1}<\theta_{k+2,j}^{k+1}<\overline{M}_{k+2,j}^{k+1}$. Therefore, the theorem holds.
\end{proof}

The proof of  \cref{thm:choosetheta} provides insight on the stability of the method and the optimal choice for the weights $\theta_{i,j}^k$. Let $$\widetilde{m}_{i,j}^{k}=\begin{cases} 0 & \mbox{ if } \overline{m}_{i,j}^k \leq 0 \\\overline{m}_{i,j}^{k} & \mbox{ otherwise }\end{cases}.$$ If $\widetilde{m}_{i,j}^{k}<\theta_{i,j}^k<\overline{M}_{i,j}^{k}$, we have a monotonically convergent sequence of solutions. The closer we choose $\theta_{i,j}^k$ to $\overline{M}_{i,j}^{k}$ the more accurate $U_{i,j}^k$ is.

If we analyze the values for $\overline{M}_{i,j}^{k}$ we see in early iterations that $\overline{M}_{i,j}^{k}$ can be very small. For example, if $k=1, h=1/20, h=1/1000$, $\min_{X_{i,j} \in \Omega^H}(\overline{M}_{i,j}^{k})=5.6\times10^{-3}$. This is a reason why we cannot use the standard parareal method where $\theta=1$. In practice, we do not know $\overline{M}_{i,j}^{k}$ a priori since it relies on knowing $u_{i,j}^f$.  Therefore, we estimate $\overline{M}_{i,j}^{k}$ in order to choose $\theta_{i,j}^k$ and create a sequence $U_{i,j}^k$ that converges very quickly to $u_{i,j}^f$. Next, we explain how we estimate $\overline{M}_{i,j}^{k}$ in practice. 

\subsection{Estimating $\overline{M}_{i,j}^{k}$}
Recall $$\overline{M}_{i,j}^{k}=\frac{u_{i,j}^f-u_{i,j}^{k-1}}{C_H(U_{i-1,j}^{k},U_{i,j-M}^{k})-C_H(U_{i-1,j}^{k-1},U_{i,j-M}^{k-1})}$$ and we would like $\tilde{m}_{i,j}^k \leq \theta_{i,j}^{k}<\overline{M}_{i,j}^{k}$. 
Since we do not know $u_{i,j}^f$ a priori, we estimate $\overline{M}_{i,j}^{k}$ by the following \begin{equation} \label{eq:firstthetaest}\overline{\theta}_{i,j}^k=\frac{u_{i,j}^{k-1}-u_{i,j}^{k-2}}{C_H(U_{i-1,j}^{k-1},U_{i,j-M}^{k-1})-C_H(U_{i-1,j}^{k-2},U_{i,j-M}^{k-2})}.\end{equation} However, upon implementation this estimation produces very unstable solutions. The values $\overline{\theta}_{i,j}^k$ become extremely large and creates sequences of solutions where $U_{i,j}^k \ll u_{i,j}^f$ or $U_{i,j}^k \gg U_{i,j}^{k-1}$. This occurs when the denominator of \cref{eq:firstthetaest} is much smaller than the numerator. We overcome this issue by using a weighted sum in the denominator, i.e., \begin{equation}\label{eq:secthetaest}\overline{\theta}_{i,j}^k= \frac{u_{i,j}^{k-1}-u_{i,j}^{k-2}}{\bigg [ \sum_{s=0}^2 \omega_s C_H(U_{i-1,j}^{k-s},U_{i,j-M}^{k-s})-C_H(U_{i-1,j}^{k-1-s},U_{i,j-M}^{k-1-s})\bigg ]/(\omega_0+\omega_1+\omega_2) }.\end{equation} To further ensure that the estimated value, $\overline{\theta}_{i,j}^k$ does not become too large we dampen the values if they are beyond a threshold and apply a smooth approximation function. Let \begin{equation}\sigma(\overline{\theta}_{i,j}^k)=\frac{1}{1+e^{\overline{(\theta}_{i,j}^k-x_0)/\gamma}}. \end{equation}  Then \begin{equation}\label{eq:thetaest} \overline{\theta}_{i,j}^{k,used} =\left [\sigma(\overline{\theta}_{i,j}^k)   \overline{\theta}_{i,j}^k+(1-\sigma(\overline{\theta}_{i,j}^k) )\delta\overline{\theta}_{i,j}^k\right]^+,\end{equation} where $x_0, \gamma,$ and $\delta$ are parameters chosen experimentally. 

\begin{figure}[h]
\centering
\subfloat[$H=1/10$]{\includegraphics{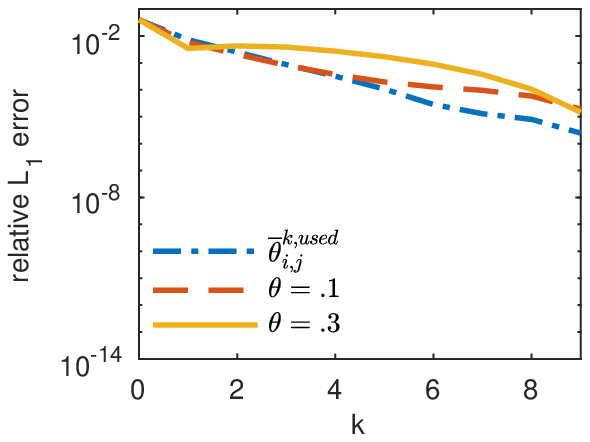}\label{fig:a1}}
\subfloat[$H=1/20$]{\includegraphics{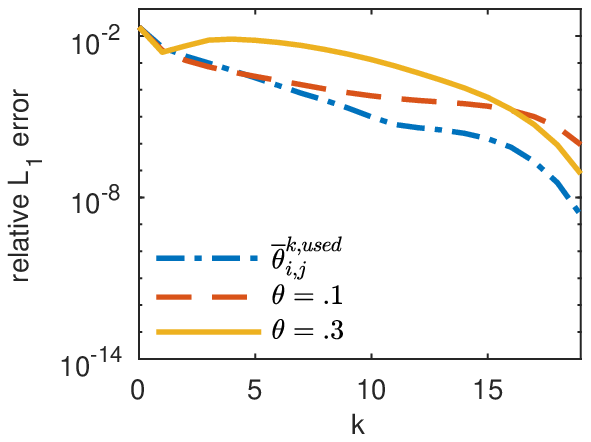}\label{fig:b1}}\\
\subfloat[$H=1/50$]{\includegraphics{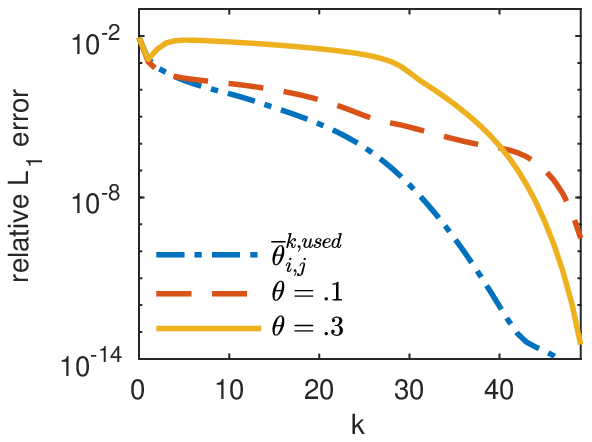}\label{fig:c1}}
\subfloat[]{\includegraphics{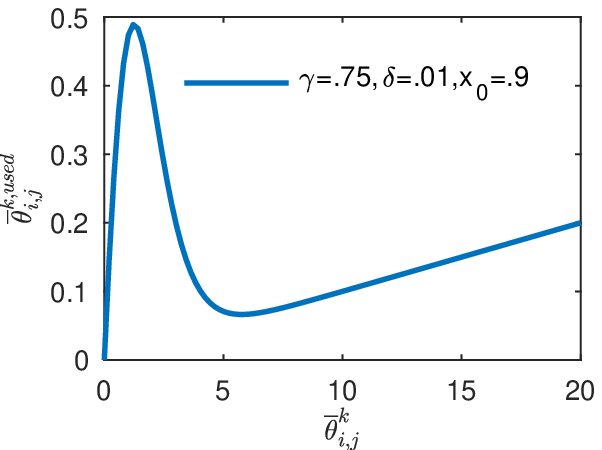}\label{fig:d1}}
\caption{Error plots of $||U^k-u^f||_{L_1}$ for specified values of $H$. In all three examples $h=1/1000$.  For  \protect\subref{fig:a1}, \protect\subref{fig:b1}, and  \protect\subref{fig:c1}, ${||u^f-u^{exact}||_{L_1}=3.79\times10^{-4}}$. \protect\subref{fig:d1} shows the  parameters $\gamma,\delta,$ and $x_0$ used to estimate $\theta$. For \protect\subref{fig:a1}- \protect\subref{fig:c1}, $\gamma=.75,\delta=.01,x_0=.9$. }
\label{fig:vartheta}
\end{figure}

\cref{fig:d1} shows the plot of $\overline{\theta}_{i,j}^k$ versus $\overline{\theta}_{i,j}^{k,used}$, and \cref{fig:a1,fig:b1,fig:c1} show the error plots for various  values of $H$. In all three examples $h=1/1000$. We see the advantage of using  $\overline{\theta}_{i,j}^{k,used}$ over a fixed value of $\theta$.

\subsection{Complexity and speed up}

Let $N=1/H$ and $M=1/(Nh)$. Define $A(N,d):=C(2^d(N+1)^d)$ be the number of flops for FSM where $C$ depends on the characteristics of the given Eikonal equation. Then the number of flops for the computations on all of the coarse grids and the causal sweep is $(dM)A(N,d)+2^dM(N+1)^2$ and the number of flops for all the subdomains combined is $(N^d)A(M,d)$. If we solve the Eikonal equation on $\Omega_h$, then the number of flops is given by $A(NM,d)$. Theoretically suppose we have enough processors to compute the solution on each coarse grid and each subdomain in parallel. Then after $k$ iterations the number of flops will be $k[A(N,d)+A(M,d)+2^dM(N+1)^2]$. For our method to achieve speed up via parallelization, we need $$k \ll \frac{A(NM,d)}{A(N,d) +A(M,d)+2^dM(N+1)^2}.$$ For example, suppose $N=20, M=100,$ and $d=2$ and we perform 10 sweeping iterations on each coarse grid as well as on each subdomain, then we need $k \ll 266$ in order to achieve speed up.

\section{Numerical results}
\label{sec:results} 
\footnote{Matlab/C++ code used to produce all numerical results can be found at https://github.com/lindsmart/MartinTsaiEikonal}Next we present some numerical results computed by the method. Every example is computed on $\Omega=[0,1]^2$ and $\Gamma$ is a set of source points chosen in each example. The focus of our examples is demonstrating the reduction in error in a few iterations and the ability to handle some stereotypes of $r_\epsilon$. We report the $L_1$ relative error in each example, i.e., $||U^k-u^f||_{L_1}$ where $u^f$ is the overall fine solution. In every example, $\overline{\theta}_{i,j}^{k,used}$ is chosen so the solution is stable and converges to the overall fine solution. 

\subsection{Smooth slowness functions}\label{sec:smoothslow}

 We test the method on two smooth oscillatory continuous slowness functions. \cref{fig:a2,fig:b2} show the contour plots of the overall fine solution where $${r^1_\epsilon(x,y)=1+.99\sin(2\pi x)\sin(2 \pi y)}$$ and $$r^2_\epsilon(x,y)=1+.5\sin(20\pi x)\sin(20 \pi y).$$  \cref{fig:c2} shows the error plots for $H=1/10$ and $h=1/500$. The method is able to handle small and large changes in direction of the characteristics. We see that the performance is worse in earlier iterations for $r_\epsilon^1$. This is because the solutions in the upper left and bottom right corner depend on more subdomains than in the $r_\epsilon^2$ case. 
 \begin{figure}[h]
\centering
\subfloat[$r_\epsilon^1$]{\includegraphics{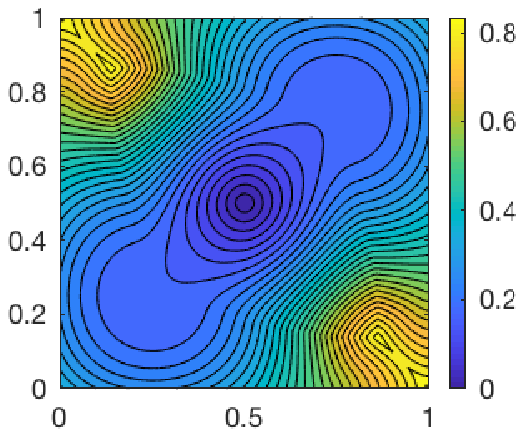}\label{fig:a2}}
\subfloat[$r_\epsilon^2$]{\includegraphics{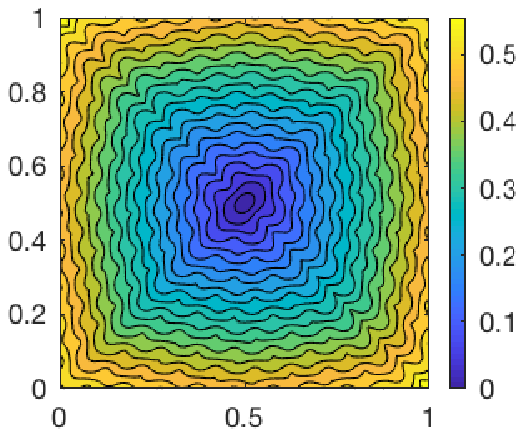}\label{fig:b2}} \;\;\;
\caption{ \protect\subref{fig:a2} Solution contour for $r^1_\epsilon(x,y)=1+.99\sin(2\pi x)\sin(2 \pi y)$. \protect\subref{fig:b2}  Solution contour for $r^2_\epsilon(x,y)=1+.5\sin(20\pi x)\sin(20 \pi y)$.}
\label{fig:smoothosc}
\end{figure}

\begin{figure}[h]
\centering
\includegraphics{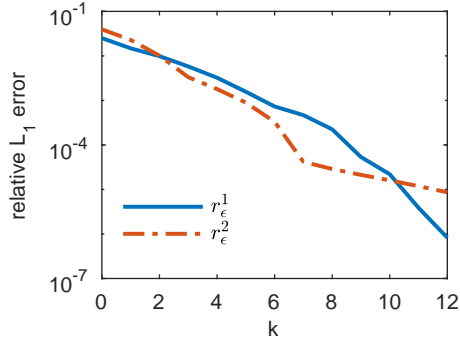}
\caption{ Relative $L_1$ error plots for  $r^1_\epsilon$ and $r^2_\epsilon$ for $H=1/10$ and $h=1/500$.}
\label{fig:c2}
\end{figure}

\subsection{Mazes and obstacles}
We show the method's performance on examples that model optimal paths through a maze. Here, we define $r_\epsilon(x,y)=1000$ inside the barriers so that all optimal paths choose to avoid them. We also test the method on the case where an obstacle may be a ``fast obstacle", i.e., $r_\epsilon(x,y)=0.01$ inside and optimal paths near the obstacle choose to go through it. We set $r_\epsilon(x,y)=1$ everywhere else, and let the source point be given by $\Gamma=\{(0,0)\}$ . These examples show the performance of the method on problems when the coarse grid captures the flow of characters in the opposite direction. The causal sweeps are critical in order to capture the right flow of characteristics because some coarse grids the right causality will never be computed. The solution contours for $r^3_\epsilon$ and $r^4_\epsilon$ is shown in \cref{fig:a3,fig:b3}, respectively.  

For $r^3_\epsilon$, there are coarse grid points which coincide with the obstacles as well as points in the obstacles that do not coincide with a coarse grid point. The circle barrier in \cref{fig:a3} contains an entire subdomain and the other circle is a fast obstacle that is contained entirely in a subdomain. The non-monotonicity of error is due to the causal sweeps. The method only provides speed up once the right characteristics have been captured around the barriers. This is seen in \cref{fig:c3} where the error starts to decrease monotonically around 20 iterations. For $r^3_\epsilon$, it takes around $2/H$ iterations for the coarse grid to ``see" around the two curved barriers.  

For $r^4_\epsilon$,  the fast obstacle is located in $[0.26,0.27]\times[0,0.6]$. This example demonstrates that the method performs well when there is large collision of characteristics that goes through several subdomains, and there is a fast obstacle affects the characteristics throughout the majority of the domain. i.e., almost every optimal path in \cref{fig:b3} must go through the obstacle.

\begin{figure}[h]
\centering
\subfloat[$r_\epsilon^3$]{\includegraphics{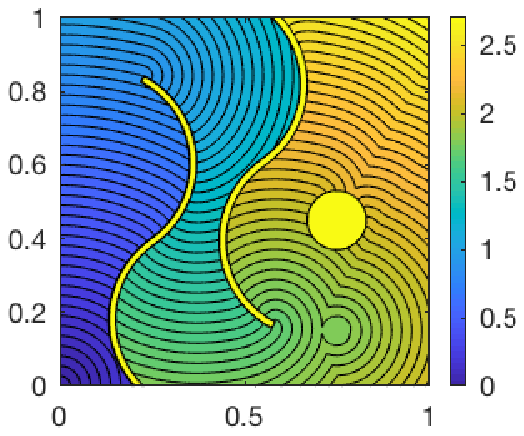}\label{fig:a3}}
\subfloat[$r_\epsilon^4$]{\includegraphics{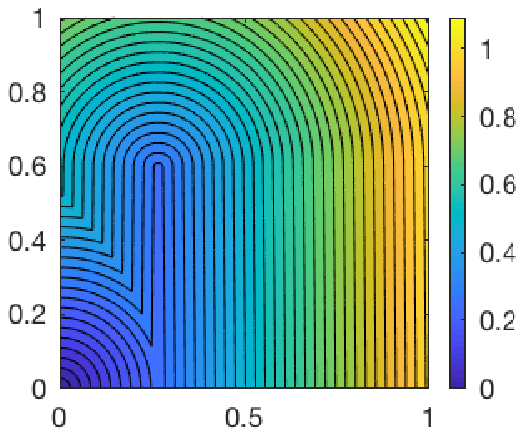}\label{fig:b3}}
\caption{\protect\subref{fig:a3} Solution contour for $r^3_\epsilon$. \protect\subref{fig:b3} Solution contour for $r^4_\epsilon$.}
\label{fig:mazes}
\end{figure}

\begin{figure}[h]
\centering
\includegraphics{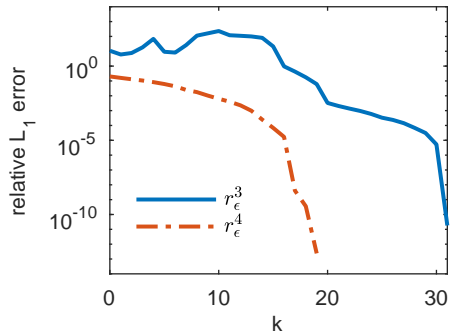}
\caption{Relative $L_1$ error plots for the slowness functions $r_\epsilon^3$ and $r_\epsilon^4$ for $H=1/10$ and $h=1/500$.}
\label{fig:c3}
\end{figure}

\subsection{Multiscale slowness functions}
\begin{figure}[h]
\centering
\subfloat[]{\includegraphics{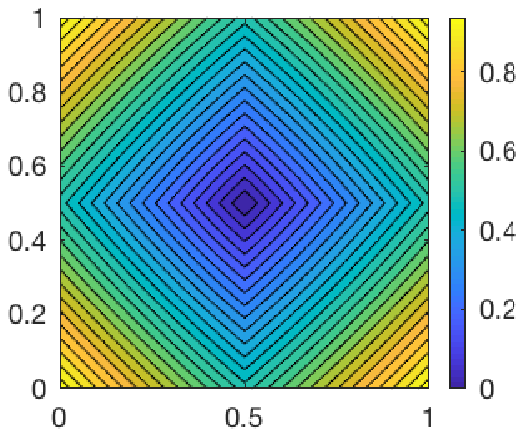}\label{fig:a5}}
\subfloat[]{\includegraphics{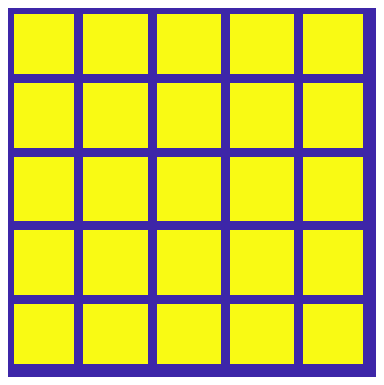}\label{fig:c5}}
\caption{\protect\subref{fig:a5} Solution contour for the squares slowness function where $h=1/1400$ and ${\epsilon=1/200}$. \protect\subref{fig:c5}  Plot of squares slowness function for $\epsilon=1/5$.}
\label{fig:multiscalecontour}
\end{figure}

\begin{figure}[h]
\centering
\includegraphics{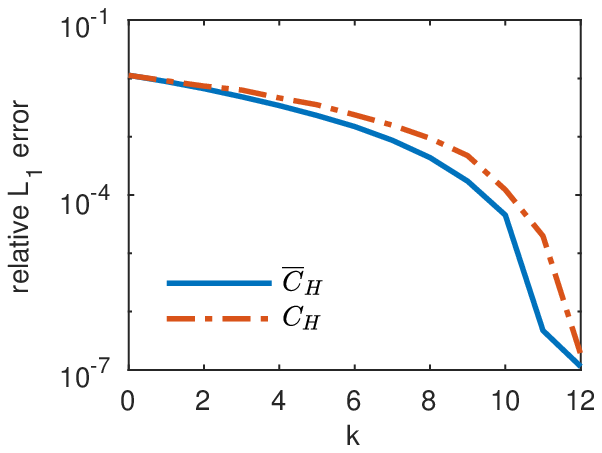}
\caption{Relative error $L_1$ error plots for $C_H$ and $\overline{C}_H$.}
\label{fig:b5}
\end{figure}

 \begin{figure}[h]
\centering
\subfloat[]{\includegraphics{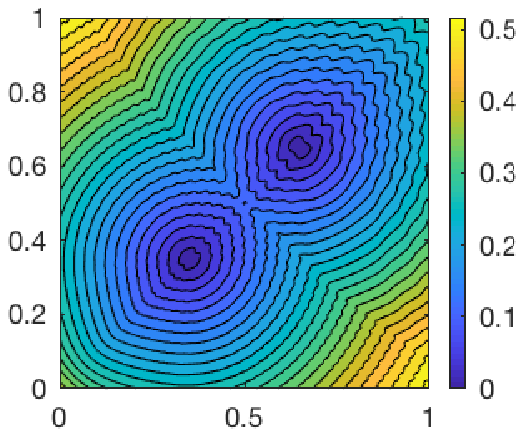}\label{fig:a4}}\;\;\;\;
\subfloat[]{\includegraphics{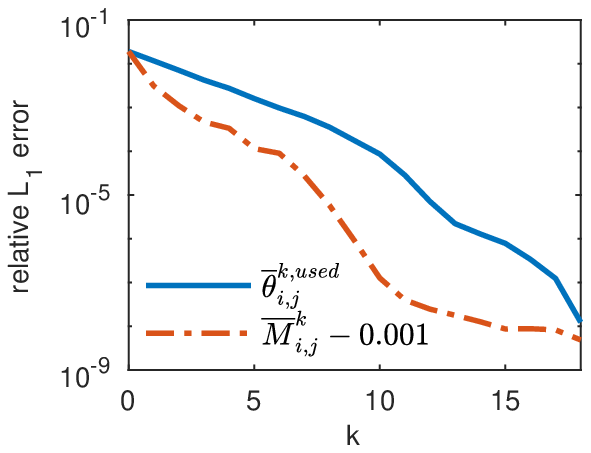}\label{fig:b4}}
\caption{Generalization of \cref{fig:b2}: \protect\subref{fig:a4} Solution contour for ${r_\epsilon(x,y)=1+.5\sin(\frac{\pi x}{\epsilon})\sin(\frac{\pi y}{\epsilon})}$ where ${\epsilon=\frac{|x|+|y|+0.001}{50}}$. \protect\subref{fig:b4} Relative $L_1$ error plot for the given $r_\epsilon$ where the estimated $\overline{\theta}_{i,j}^{k,used}$ and $\overline{M}_{i,j}^k-0.001$ are used.}
\label{fig:generalosccontour}
\end{figure}

Finally, we show the advantage of the method on multiscale slowness functions.  These examples arise in front propagation in multiscale media problems. In our computations, we let the scale epsilon be 7 fine grid points, i.e., $\epsilon=7h,$ in order for the fine grid to fully capture the micro scale behavior. As mentioned in \cref{sec:intro}, one approach for numerically resolving the multiscale behavior in $r_\epsilon$ is homogenization. We will first demonstrate our method on an example where the homogenized slowness function, $\overline{r},$ can be computed.

Let the source point be given by $\Gamma=\{(0.5,0.5)\},$ and define the slowness function as follows: let $$r(x,y)=\begin{cases} 1 & \mbox{ if } x=0 \mbox{ or } y=0 \\ 2 & \mbox{ otherwise }  \end{cases}$$ and define $r_\epsilon$ by extending $r$ by periodicity $\epsilon.$ \cref{fig:c5} shows the slowness function for $\epsilon=1/5$. The homogenized slowness function is anisotropic and is equal to ${\overline{r}(\alpha)=(\alpha_1+\alpha_2)}$ where $\alpha=(\alpha_1,\alpha_2)$ and $|\alpha|=1$. This is due to the optimal paths moving only vertically or horizontally \cite{Ober09}. 

In our computations, we chose $H=1/14,h=1/1400,$ and  $\epsilon=1/200.$ The value of $r_\epsilon$ on the coarse grid points is always equal to 1. Thus, the coarse grid solver is always solving the equation $$|\nabla u|=1.$$ This equation is inaccurate as seen by the shape of the solution contour in \cref{fig:a5} which is a diamond and not a circle.  Suppose in the method we have the coarse solver solve an equation that better describes the macro scale behavior of the solution. Since in this example we know the homogenized equation, on the coarse grid we can solve the homogenized equation \begin{align}\label{eq:homog} \frac{1}{\overline{r}(\frac{\nabla u}{|\nabla u |})}|\nabla \overline{u} |=1.\end{align} Denote the homogenized equation coarse solver by $\overline{C}_H$. \cref{fig:b5} shows the relative $L_1$ error plots for both the method that uses the $C_H$ as described in \cref{sec:main} and the method that uses $\overline{C}_H$ in place of $C_H$. As expected, we can see that method that uses $\overline{C}_H$ performs better.

Next, we demonstrate the method on a generalization of $r^2_\epsilon$ as defined in \cref{sec:smoothslow}. Notice in \cref{fig:b2}, we can see the rough shape of the contours of the solution to the homogenized equation. Let $${r_\epsilon=1+.5\sin(\frac{\pi x}{\epsilon})\sin(\frac{ \pi y}{\epsilon})}.$$ For $r^2_\epsilon$, $\epsilon=1/20$. Now suppose we let $\epsilon$ vary through out the domain, i.e., the problem cannot be solved via homogenization. Define $${\epsilon=\frac{|x|+|y|+0.001}{50}}$$ and ${\Gamma=\{(0.35,0.35),(0.65,0.65)\}}$. Then $\epsilon$ is very small near $(0,0)$ and increases as we move diagonally up and right through the domain. \cref{fig:a4} shows the solution contour for this given $r_\epsilon$. Since $u^f$ can be computed a priori, we compare the error plots of our method where we use formula \cref{eq:thetaest} and $\overline{M}_{i,j}^k-0.001$ as the choice of weights in the method. In this example, $H=1/14$ and $h=1/1400$. The error plots \cref{fig:b4} suggest that the proposed formula \cref{eq:thetaest} has room for improvement in estimating $\overline{M}_{i,j}^k$. For example,   if $X_{i,j}=(0,0)$, we have $${\min_k (|\overline{M}_{i,j}^k-0.001-\overline{\theta}_{i,j}^{k,used}|)=0.1053},$$ but $${\max_k (|\overline{M}_{i,j}^k-0.001-\overline{\theta}_{i,j}^{k,used}|)=2.508\times10^{5}}.$$

Finally, we show the results of our method on a case where the values of the slowness function are random. We follow the set up of the random slowness function in \cite{Ober09}. Consider a periodic checkerboard where the slowness function is either 1 or 2 with probability 1/2. Let the scale of the periodicity be $\epsilon$. A solution contour and the plot of a random slowness function is shown in \cref{fig:a6,fig:c6}, repsectively. In \cite{Ober09}, the authors showed experimentally the homogenized slowness function, $\overline{r}$, is isotropic and its value is a little less than 1.  \cref{fig:b6} shows the plot of the average error over 20 trials where $H=1/14$ and $h=1/1400$.

\begin{figure}[h]
\centering
\subfloat[]{\includegraphics{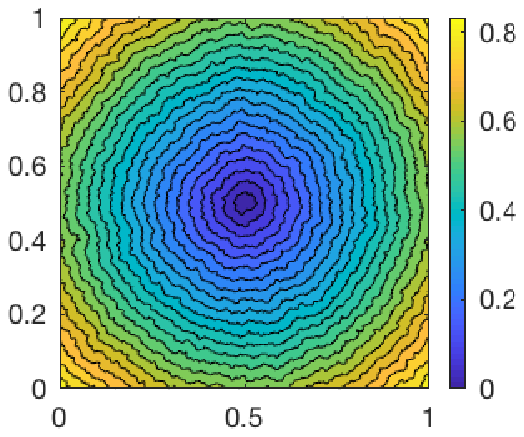}\label{fig:a6}}
\subfloat[]{\includegraphics{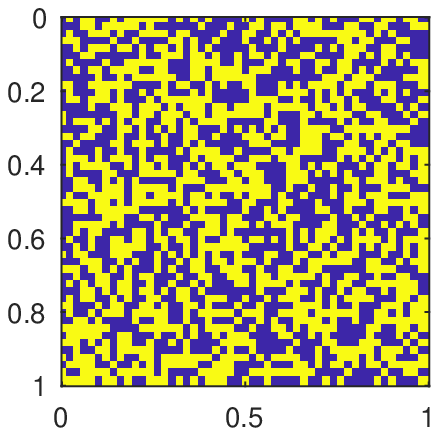}\label{fig:c6}}
\caption{\protect\subref{fig:a6} Solution contour plot of random periodic checkboard of scale $\epsilon$. \protect\subref{fig:c6} Plot of random slowness function.}
\label{fig:randomcontour}
\end{figure}

\begin{figure}[h]
\centering
\includegraphics{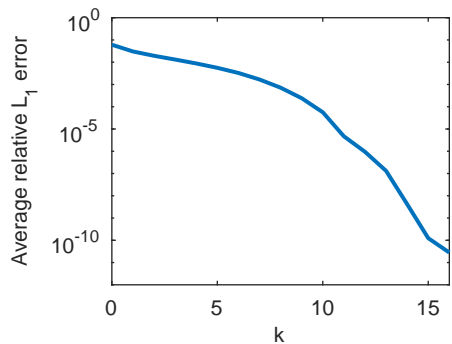}
\caption{Relative $L_1$ error plot for random $r_\epsilon$. }
\label{fig:b6}
\end{figure}

\section{Summary and conclusion}\label{sec:conclusions}
In this paper, we presented a new domain decomposition algorithm for solving boundary value Eikonal equations. Traditional domain decomposition algorithms are difficult to apply due to the nonlinear casual nature of the Eikonal equation. We overcome this difficulty by using coarse and fine grids to propagate information from the subdomains into the coarse level. The parallelization of our method is simple. The coarse grid is initialized using FSM, and the values and wind directions are used to define the boundary conditions for the subdomains. Next, we perform fine grid computations in each subdomain in parallel. In our coarse grid updates we apply an adapted weighted parareal scheme to speed up convergence. A causality sweep is performed after each coarse grid update in order to ensure the wind directions are captured correctly. 

By clever choice of the weight, it is possible to stabilize parareal-like iterative methods. At each coarse grid node, $\theta_{i,j}^k$ is computed by estimating $\overline{M}_{i,j}^k$ which is defined to be the upper bound for $\theta_{i,j}^k$ to create a monotonically decreasing sequence of solutions for the model problem. We show via numerical examples on a model problem that the choice of $\theta_{i,j}^k$ stabilizes the method and using a variable $\theta$ has advantages over a fixed value. We speculate  that improving the estimate of $\overline{M}_{i,j}^k$, it would be possible further increase the stability and speed up of the method.

We demonstrated the method on several classes of slowness functions showing that the performs well on general types of $r_\epsilon$ including multiscale slowness functions where homogenization cannot be applied. The errors decrease to an acceptable tolerance well within the limit of theoretical speed up. Thus, we can solve efficiently through parallelization multiscale problems beyond the conventional multiscale methods. The example in \cref{fig:multiscalecontour} gives us a direction for future work. We would like to use the coarse and fine grid computations to estimate the effective slowness function ``on the fly,'' which could further speed up the method based on the evidence in \cref{fig:b5}.

\bibliographystyle{siamplain}
\bibliography{ms}
\end{document}